\numberwithin{equation}{section}
\def\be{\begin{equation}}
\def\ee{\end{equation}}
\def\X{\mathcal{X}}
\def\Y{{\cal Y}}
\def\la{\lambda}
\def \[{\begin{equation}}
\def \]{\end{equation}}
\def\R{{\mathbb R}}
\def\L{{\cal L}}
\def\S{\mathcal{U}}
\def\D{{\cal D}}
\def\st{\mathrm{s. t.}}
\def\nn{\nonumber}
\def\Tsf{\mathsf{T}}
\def\llang{\left\langle}
\def\rrang{\right\rangle}
\def\Sigt{\Sigma_{1}}
\def\Sigtt{\Sigma_{2}}
\def\SigT{\Sigma}
\def\Pone{S}
\def\Ptwo{T}
\def\Ghat{\widehat{G}}
\newcommand{\dom}{\mathbb{D}}
\newcommand{\rev}[1]{{\color{black}{#1}}} 
\newcommand{\revise}[1]{{\color{black}{#1}}} 
\newcommand{\revi}[1]{{\color{black}{#1}}} 
\newcommand{\revis}[1]{{\color{black}{#1}}} 
\begin{document}

\markboth{Y.~GU, B.~JIANG AND D.R.~HAN}{An Indefinite-proximal-based Strictly Contractive Peaceman-Rachford Splitting Method}

\title{AN INDEFINITE-PROXIMAL-BASED STRICTLY CONTRACTIVE PEACEMAN-RACHFORD SPLITTING METHOD}

\author{Yan Gu
\thanks{Department of Mathematics, Nanjing University of Aeronautics and Astronautics,\\ Nanjing, 210016, China \\ Key Laboratory of Mathematical Modelling and High Performance Computing of Air Vehicles (NUAA), MIIT, Nanjing, 210016, China \\ Email: guyanmath@nuaa.edu.cn}
\and
 Bo Jiang
\thanks{School of Mathematical Sciences, Key Laboratory for NSLSCS of Jiangsu Province, \\Nanjing
Normal University, Nanjing, 210023, China\\ Email: jiangbo@njnu.edu.cn}
\and
Deren Han\footnote{Corresponding author}
\thanks{LMIB, School of Mathematics and Systems Science,  Beihang University, Beijing, 100191, China \\ Key Laboratory for NSLSCS of Jiangsu Province, China\\ Email: handr@buaa.edu.cn}
}

\maketitle

\begin{abstract}
The Peaceman-Rachford splitting method is efficient for minimizing \rev{a convex optimization problem with a \revise{separable} objective function and linear constraints.} However, its convergence was not guaranteed without extra requirements. He {\it et al.}  (SIAM J. Optim. 24: 1011 - 1040, 2014) proved the convergence of a strictly contractive Peaceman-Rachford splitting method by employing a suitable underdetermined relaxation factor.  In this paper, we further extend the so-called strictly contractive Peaceman-Rachford splitting method by using two different relaxation factors. Besides, \rev{motivated by the recent advances on the ADMM type method with indefinite proximal terms, we employ the indefinite proximal term in the strictly contractive Peaceman-Rachford splitting method.}
We show that the proposed indefinite-proximal strictly contractive Peaceman-Rachford splitting method is convergent and also prove the $o(1/t)$ convergence rate in the nonergodic sense. The numerical tests on the $l_1$ regularized least square problem demonstrate the efficiency of the proposed method.
\end{abstract}

\begin{classification}
90C25, 90C30, 65K05.
\end{classification}

\begin{keywords}
Indefinite proximal, Strictly contractive, Peaceman-Rachford splitting method, Convex minimization, Convergence rate.
\end{keywords}

\section{Introduction}
\label{sec:into}
We consider the convex minimization problem with linear constraints and a separable objective function:
\[\label{prob:cp}
 \min \,  \theta_1(x) +\theta_2(y),  \quad \st \quad   Ax+By=b, \; x \in \X, \; y \in \Y,
\]
\revi{where $A\in\R^{m\times {n_1}}$ and $B\in\R^{m\times {n_2}}$, $b\in\R^m$, $\X:= \R^{n_1}$, $\Y := \R^{n_2}$.    The functions  $\theta_1(x) := p(x) + f(x)$ and $\theta_2(y) := h(y) + g(y)$, where $p:\X \to (-\infty, +\infty]$ and $h:\Y \to (-\infty, +\infty]$ are proper closed convex (could be nonsmooth) functions; $f:\X \to (-\infty, +\infty)$ and $g:\Y \to (-\infty, +\infty)$ are two convex functions with Lipschitz continuous gradients on $\X$ and $\Y$.  Throughout, the solution set of (\ref{prob:cp}) is assumed to be nonempty. Note that one can also consider $\X$ and $\Y$ as general real finite dimensional Euclidean or Hilbert  spaces, see \cite{chen2021equivalence,han2017linear,zhang2017linearly} for instances. For ease of presentation, we adopt the $\X$ and $\Y$ as the ordinary $\R^{n_1}$ and $\R^{n_2}$ in this paper.
}

Let  $\L_{\beta}(x,y,\la)$ be the augmented Lagrangian function for \eqref{prob:cp} that defined by
\[\label{augL}
\L_{\beta}(x,y,\la):=\theta_1(x)+\theta_2(y)-\langle\la,Ax+By-b\rangle+\frac{\beta}{2}\|Ax+By-b\|^2,\]
in which $\la\in \R^m$ is the multiplier associated to the linear constraint and  $\beta>0$ is a penalty parameter.

A well-known method \revise{called} alternating direction method of multipliers (ADMM) is efficient to \revise{minimize} such problems. It was observed in Gabay and Mercier \cite{gabay1975dual}, Glowinski and Marrocco \cite{glowinski1975approximation} that ADMM can be derived from applying the Douglas-Rachford operator splitting method \cite{douglas1956numerical} to the dual of \revise{the problem} \eqref{prob:cp}. The iterative sequence is given as the following recursion:
\begin{subnumcases}{}
x^{k+1}=\arg\min_{x\in\X}\L_{\beta}(x,y^k,\la^k),\label{alx}\\
y^{k+1}=\arg\min_{y\in\Y}\L_{\beta}(x^{k+1},y,\la^k),\label{aly}\\
\la^{k+1}=\la^k-\beta(Ax^{k+1}+By^{k+1}-b).\label{all}
\end{subnumcases}
Based on another \rev{classical} operator splitting method, i.e., \revise{the} Peaceman-Rachford operator splitting method \cite{peaceman1955numerical}, one can derive the following similar method for \eqref{prob:cp}:
\begin{subnumcases}{}
x^{k+1}=\arg\min_{x\in\X}\L_{\beta}(x,y^k,\la^k),\label{alxp}\\
\la^{k+\frac{1}{2}}=\la^k-\beta(Ax^{k+1}+By^{k}-b),\label{allb}\\
y^{k+1}=\arg\min_{y\in\Y}\L_{\beta}(x^{k+1},y,\la^{k+\frac{1}{2}}),\label{alyp}\\
\la^{k+1}=\la^{k+\frac{1}{2}}-\beta(Ax^{k+1}+By^{k+1}-b).\label{allp}
\end{subnumcases}
While the global convergence of the alternating direction method of multipliers \eqref{alx}-\eqref{all} can be established under very mild conditions \cite{boyd2011distributed}, the convergence of the Peaceman-Rachford-based method \eqref{alxp}-\eqref{allp} can not be guaranteed without further conditions \cite{corman2014generalized}.

{He et al. \cite{he2014strictly} proposed a modification of \eqref{alxp}-\eqref{allp} by introducing a parameter $\alpha$ to the update scheme of the dual variable $\lambda$ in \eqref{allb} and
\eqref{allp}. Note that when $\alpha=1$, it is the same as \eqref{alxp}-\eqref{allp}. They explained the \revise{non-convergence} behavior of  \eqref{alxp}-\eqref{allp} from the contractive perspective, i.e., the distance from the iterative point to the solution set is merely nonexpansive, but not contractive. Under the condition that $\alpha\in (0,1)$, they proved the same sublinear convergence rate as that for ADMM \cite{he2012convergence}. Particularly, they showed that it achieves an approximate solution of \eqref{prob:cp} with the accuracy of $O(1/t)$ after $t$ iterations\footnote{A worst-case $O(1/t)$ convergence rate means the accuracy to a solution under certain criteria is of the order $O(1/t)$ after $t$ iterations of an iterative scheme; or equivalently, it requires at most $O(1/\epsilon)$ iterations to achieve an approximate solution with an accuracy of $\epsilon$. See, e.g., \cite{nesterov1983method, nesterov2013gradient}.}, both in the ergodic and  nonergodic sense.
Besides, \revis{Gu et al. \cite{gu2015semi}}  and  He et al. \cite{he2016convergence} took two different constants $\alpha$ and $\gamma$ to different step sizes in \eqref{allb} and \eqref{allp}. The convergence results, including global convergence, the worst-case $O(1/t)$ convergence rate in the ergodic sense, have been established in it but without the worst-case $O(1/t)$ convergence rate in the nonergodic sense.  Chen et al. \cite{Chen2016} proposed a variant Peaceman-Rachford splitting method in a prediction-correction framework.
\revise{\rev{For some recent advances of the Peaceman-Rachford splitting method}, one can refer to \cite{li2017peaceman, wu2017symmetric, bai2018generalized, he2018relaxed, gao2018symmetric, jiang2018generalized}, \rev{to name a few.}}

\rev{Considering that in many cases the subproblem in \eqref{alx}-\eqref{all} and \eqref{alxp}-\eqref{allp} might  be difficult to solve and that in some applications \revise{$\theta_1$} or $\theta_2$ is a convex quadratic function,} Eckstein \cite{eckstein1994some} and He et al. \cite{HLHY2002}  considered to add proximal terms to the subproblems for different purpose. Fazel et al. \cite{fazel2013hankel} proposed the following semi-proximal ADMM scheme:
\begin{subnumcases}{\label{equ:sADMM}}
 x^{k+1}=\arg\min_{x\in\X}\ \L_{\beta}(x,y^k,\la^k) + \frac12 \|x - x^{k}\|_{\Pone}^2,\label{equ:sADMM1}\\
y^{k+1}=\arg\min_{y\in\Y}\ \L_{\beta}(x^{k+1},y,\la^k) + \frac12 \|y - y^{k}\|_{\Ptwo}^2,\label{equ:sADMM2} \\
 \la^{k+1}=\la^{k}-\gamma\beta(Ax^{k+1}+By^{k+1}-b).\label{equ:sADMM3}
\end{subnumcases}
where $\gamma \in (0, (1+\sqrt 5)/2)$. \rev{They} allowed $S$ and $T$ to be positive \revise{semidefinite} which makes the algorithm more flexible. We refer the reader to \cite{deng2016global, he2012convergence, fazel2013hankel, xu2011class} for a brief history of the development of the semi-proximal ADMM and the corresponding convergence results.
\rev{To further relax the requirements of the proximal terms,
Li et al. \cite{li2016majorized} considered majorized ADMM with indefinite proximal $S$ and $T$. They established the convergence and the sublinear convergence rate under some mild assumptions. The numerical results in \cite{li2016majorized} showed that the (majorized) ADMM with indefinite proximal term always performs better than that with semidefinite proximal terms.} Very recently, \revise{He et al. \cite{he2019optimally} obtained a linearized ADMM with \rev{an optimal indefinite proximal term. In their method, $S = 0$ and}
$T$ in \eqref{equ:sADMM2} is chosen by
\[\label{T0}
T = \tau r I_{n_2} - \beta B^{\Tsf} B ~~\mathrm{with}~~ r > \beta\|B^{\Tsf} B\|,~~\tau \in (0.75,1),
\]
\revi{where $\|B^{\Tsf} B\|$ means the spectral norm of $B^{\Tsf}B$.}
\rev{Note that they require that the dual stepsize  $\gamma = 1$ in \eqref{equ:sADMM}}. The small value $\tau \in (0.75,1)$ can ensure the proximal term has less weight for the $y$-subproblem \eqref{equ:sADMM2}, and thus allows for \rev{a larger step.}}
Solving a general problem, i.e., finding zeros of a maximal operator, using a proximal point algorithm with indefinite proximal term, was recently developed in \cite{Jiang2021Indefinite}.

{It is natural to extend the proximal ADMM to the proximal Peaceman-Rachford splitting method}. For convenience, we first introduce the whole update scheme of the {\it indefinite-proximal-based strictly contractive Peaceman-Rachford splitting method (iPSPR)}
\begin{subnumcases}{\label{equ:sP-PRSM}}
 x^{k+1}=\arg\min_{x\in\X}\ \L_{\beta}(x,y^k,\la^k) + \frac12 \|x - x^{k}\|_{\Pone}^2,\label{equ:sP-PRSM1}\\
 \la^{k+\frac{1}{2}}=\la^k-\alpha\beta(Ax^{k+1}+By^{k}-b),\label{equ:sP-PRSM2}\\
 y^{k+1}=\arg\min_{y\in\Y}\ \L_{\beta}(x^{k+1},y,\la^{k+\frac{1}{2}}) + \frac12 \|y - y^{k}\|_{\Ptwo}^2,\label{equ:sP-PRSM3}\\
 \la^{k+1}=\la^{k+\frac{1}{2}}-\gamma\beta(Ax^{k+1}+By^{k+1}-b).\label{equ:sP-PRSM4}
\end{subnumcases}
where \rev{$S$ and $T$ are symmetric and possibly indefinite.}
Gao et al. \cite{gao2018symmetric} \rev{considered the generalized ADMM with indefinite proximal term, which corresponds to \eqref{equ:sP-PRSM} with $S = 0$ and $\gamma = 1$. The proximal term $T$ takes a similar formulation as \eqref{T0} but with} $\tau \in [ \frac{\alpha^2 - \alpha + 4}{\alpha^2 - 2\alpha + 5},1).$ Jiang et al. \cite{jiang2018generalized} \rev{considered the same generalized ADMM as in \cite{gao2018symmetric}, but they give an optimal bound of $\tau$ as} $\tau \in (\frac{3 + \alpha}{4}, 1)$. \rev{For other related \rev{works} one can refer to  \cite{li2015proximal, sun2017symmetric}.}

\rev{In this paper, we focus on \eqref{equ:sP-PRSM} with indefinite $S$ and $T$.}
\revise{Our main contributions are \rev{two-fold}.
Firstly,} motivated by the nice analysis techniques in \cite{he2014strictly} and \cite{xu2011class},  \rev{we prove the global convergence of iPSPR under some assumptions on $S$ and $T$, see \eqref{equ:condition:S} and \eqref{equ:condition:T}, in which the stepsizes $\alpha$ and $\gamma$ are in the range
\begin{align}\label{stepsize}
(\alpha, \gamma) \in  \dom := \left\{(\alpha, \gamma) : 0\leq \alpha < 1, 0\leq \gamma <  \frac{1 - \alpha + \sqrt{(1+\alpha)^2 + 4(1 - \alpha^2)}}{2}, \alpha + \gamma > 0\right\}.
\end{align}
\revise{With some additional mild requirements, see \eqref{equ:T:nonergodic}, we \rev{prove that  the  iPSPR} is $o(1/t)$ sublinearly convergent in the nonergodic sense.}} \revise{Secondly,  our proposed requirements on the  proximal $T$ can cover \rev{some} existing results, such as the special linearized choice \eqref{T0} in \cite{he2019optimally, jiang2018generalized}}. \rev{More importantly, our proposed requirements on the  proximal $T$ employs both the Hessian information of the objective function and the information of $\beta B^{\Tsf}B$ for the first time. Note that He et al. \cite{he2019optimally} only uses the information of $\beta B^{\Tsf}B$, while Li et al.  \cite{li2016majorized} only considers the Hessian information of the objective function.}

The rest of this paper is organized as follows. In section  \ref{section:preliminaries}, we give the optimality condition of \eqref{prob:cp} by using the variational inequality and also list some assertions which will be used in later analysis. In section \ref{section:convergence}, we first give the contraction analysis of iPSPR \eqref{equ:sP-PRSM}, \revise{and then establish the global convergence. We will discuss how to choose $T$ in the end of section \ref{section:convergence}. The detailed formulae will be given for the different ranges of the parameters $\alpha$ and $\gamma$. We discuss the nonergodic sublinear convergence rate in section \ref{section:sublinear}.}  In section \ref{section:numerical}, we test the $l_1$ regularized least square problem to show the efficiency of the proposed iPSPR \eqref{equ:sP-PRSM}. Finally, we make some conclusions in section \ref{section:conclusions}.
}

\section{Preliminaries}\label{section:preliminaries}

In this section, we give the optimality condition of \eqref{prob:cp} and  some notations or relations which will be frequently used in our analysis. Denote $\Omega = \X \times \Y \times \R^m$. Let $\S$ be the feasible set of \eqref{prob:cp}, namely, $\S = \{(x,y) : Ax + By = b, x \in \X, y \in \Y\}$ and denote $\D = \S \times \R^m$. Throughout this paper, we make the following assumption.
\begin{assumption}\label{ass:opt} Let $\Omega^*\subset \D$ be the set whose elements are the  optimal solutions  of \eqref{prob:cp} and the associating dual solutions of \eqref{prob:cp}. Throughout the paper, we assume that
$\Omega^*$ is non-empty.
\end{assumption}

\subsection{Optimality condition of \eqref{prob:cp}}

Owing to the convexity of $\theta_1(\cdot) $ and $\theta_2(\cdot)$,  there exist two positive semidefinite matrices  $\Sigt$ and $\Sigtt$ such that for all $x, \; x' \in \R^{n_1}$ and \rev{$\xi_x \in \partial \theta_1(x)$, $\xi_x' \in \partial \theta_1(x')$,}
\[ \label{equ:x:convex}
  \langle x - x',  \xi_x - \xi_{x}' \rangle  \geq \|x - x'\|_{\Sigt}^2,
\]
and for all $y,\; y' \in \R^{n_2}$, \rev{$\xi_y \in \partial \theta_2(y)$, $\xi_y' \in \partial \theta_2(y')$,}
\[\label{equ:y:convex}
\langle y - y', \xi_y - \xi_y' \rangle \geq \|y - y' \|_{\Sigtt}^2.
\]

\rev{Denote  $u = \begin{pmatrix}x\\y\end{pmatrix},\; v =  \begin{pmatrix}y\\\lambda\end{pmatrix}$ and  $w = \begin{pmatrix}x\\ y\\\lambda\end{pmatrix}$. For given $w$,  and some specific subgradients $\xi_x \in \partial \theta_1(x)$ and $\xi_y \in \partial \theta_2(x)$, we define
$ F(w, \xi_x, \xi_y) = \begin{pmatrix}
\xi_x -A^{\Tsf} \lambda \\ \xi_y -B^{\Tsf} \lambda \\ Ax + By - b \end{pmatrix}.$}  Due to the convexity of $\theta_1(\cdot)$ and $\theta_2(\cdot)$,  it is easy to show that the operator $F(\cdot)$ is \rev{monotone}. Specifically, for any \rev{$w, w' \in \D$}, we have
\begin{equation}\label{equ:F:mono}
\langle w - w', \rev{F(w, \xi_x, \xi_y) - F(w',\xi_x', \xi_y')} \rangle = \left \langle \begin{pmatrix} x - x' \\ y - y' \end{pmatrix},  \begin{pmatrix} \xi_x - \xi_x' \\ \xi_y - \xi_y' \end{pmatrix}  \right\rangle \geq \|u - u'\|_{\SigT}^2,
\end{equation}
where $\SigT = \begin{pmatrix} \Sigt & 0 \\ 0 & \Sigtt \end{pmatrix}$
and  the inequality is due to \eqref{equ:x:convex} and \eqref{equ:y:convex}.

\rev{Following Theorem 3.1.24 in \cite{nesterov2018lectures}}, we
say that $w^* \in \Omega^*$  \rev{if  and only if there exists $\xi_{x}^* \in \partial \theta_1(x^*)$ and $\xi_{y}^* \in \partial \theta_2(y^*)$ such that $\langle x - x^*, \xi_{x}^*\rangle + \langle y - y^*, \xi_{y}^*\rangle \geq 0$, which is further equivalent to}
\[\label{equ:opt:cond:D}
 \llang w - w^*, \rev{F(w^*, \xi_x^*, \xi_y^*)} \rrang \geq 0, \quad \forall w \in \D,
\]
\rev{because  $\llang w - w^*, F(w^*, \xi_x^*, \xi_y^*) \rrang = \langle x - x^*, \xi_{x}^*\rangle + \langle y - y^*, \xi_{y}^*\rangle \geq 0$, $ \forall w \in \D$.
}

\subsection{Some notations}
We use the symbol $0$ to denote a zero matrix, whose size can be always easily identified   form the context. We use $\|\cdot \|$ to denote the 2-norm of a vector. We denote  $\|z\|_G^2 = z^{\Tsf} Gz$  for $z \in \R^n$ and $G \in \R^{n \times n}$. For a real symmetric matrix $Z$, we mark $Z\succeq 0$ (\rev{resp. $Z \succ 0$}) if $Z$ is positive semidefinite (\rev{resp. positive definite}). For any given symmetric matrix $T$, we decompose it as
$$
T = T_{+} - T_{{-}} \quad \mbox{with}\quad \ T_{+} \succeq 0\quad \mbox{and}\quad  \ T_{-} \succeq 0.
$$

To make the analysis more \rev{elegantly}, we use $r^k = Ax^k + By^k - b$ for short.  Similarly, for any $w \in \Omega$, we denote $r(w) = Ax + By - b$. Obviously, there holds that $r(w) = 0$ for any $w \in \D$.  For ease of the analysis, we define\
\begin{align} \label{equ:H}
  H =\frac{1}{\alpha + \gamma} \begin{pmatrix} (\alpha + \gamma - \alpha \gamma) \beta B^{\Tsf} B & -\alpha B^{\Tsf} \\ -\alpha B & \frac{1}{\beta} I_m\end{pmatrix} \quad
\end{align}
and
\begin{align}\label{equ:H:hat}
\hat H :=
  \left(
  \begin{matrix}
  T + \Sigma_2 & 0 \\ 0 & 0
  \end{matrix}
   \right)
  + H =
  \rev{\begin{pmatrix} \displaystyle
   {\Ptwo} + \Sigtt + \frac{\alpha + \gamma - \alpha \gamma}{\alpha + \gamma} \beta B^{\Tsf} B & \displaystyle -\frac{\alpha}{\alpha + \gamma} B^{\Tsf} \\
 \displaystyle  -\frac{\alpha}{\alpha + \gamma} B & \displaystyle \frac{1}{(\alpha + \gamma) \beta} I_m
  \end{pmatrix}.
  }
\end{align}
Denote $P = \begin{pmatrix} {\Pone} & 0 \\ 0 & {\Ptwo}\end{pmatrix}$  and define
 \begin{align} \label{equ:G}
  G := \begin{pmatrix} P & 0 \\ 0 & 0  \end{pmatrix} + \begin{pmatrix} 0 & 0 \\ 0 & H \end{pmatrix}=
   \begin{pmatrix} {\Pone}  &  0 & 0 \\
  0 &  \displaystyle {\Ptwo} + \frac{\alpha + \gamma - \alpha \gamma}{\alpha + \gamma} \beta B^{\Tsf} B &\displaystyle  -\frac{\alpha}{\alpha + \gamma} B^{\Tsf} \\
  0  & \displaystyle -\frac{\alpha}{\alpha + \gamma} B & \displaystyle \frac{1}{(\alpha + \gamma) \beta} I_m
  \end{pmatrix}
\end{align}
and
\begin{align}\label{equ:Ghat}
\Ghat := \begin{pmatrix}  \Sigma & 0 \\ 0 & 0  \end{pmatrix} + G  =  \begin{pmatrix}  {\Pone} + \Sigt   &  0 & 0 \\
  0 & \displaystyle  {\Ptwo} + \Sigtt + \frac{\alpha + \gamma - \alpha \gamma}{\alpha + \gamma} \beta B^{\Tsf} B &\displaystyle  -\frac{\alpha}{\alpha + \gamma} B^{\Tsf} \\
  0  &\displaystyle  -\frac{\alpha}{\alpha + \gamma} B & \frac{1}{(\alpha + \gamma) \beta} I_m
  \end{pmatrix}
  = \begin{pmatrix} \Pone + \Sigma_1  & 0 \\ 0 & \hat H \end{pmatrix}.
\end{align}
\rev{It follows from \eqref{equ:G} and \eqref{equ:Ghat} that} for any $w, w' \in \Omega$
\[ \label{equ:GH_norm}
\| w - w'\|_G^2 = \|u - u'\|_P^2 + \|v - v'\|_H^2
\]
and
\[\label{equ:GhatH_norm}
\| w - w'\|_{\Ghat}^2 = \|u - u'\|_{\Sigma}^2 + \|w - w'\|_G^2 = \|x - x'\|_{S + \Sigma_1}^2 + \|v - v'\|_{\hat H}^2.
\]

With the update scheme \eqref{equ:sP-PRSM2} and \eqref{equ:sP-PRSM4}, it is easy to have
\[ \label{equ:lak:lak1}
\la^k = \la^{k+1} + (\alpha + \gamma)\beta r^{k+1} + \alpha \beta B(y^k - y^{k+1}).
\]
With \eqref{equ:H} and \eqref{equ:lak:lak1}, we thus have
\begin{align}
\|v^k - v^{k+1}\|_H^2 
  ={} &  (1- \alpha) \beta  \|B(y^k - y^{k+1})\|^2 + (\alpha + \gamma) \beta \|r^{k+1}\|^2. \label{equ:lemma:vkvk1:b1}
\end{align}

\rev{Finally, it is easy to have the following proposition.
\begin{proposition}\label{proposition:H:G}
If  $0 \le \alpha \leq 1$ and $\gamma > 0$, then $H \succeq 0$. If  $T + \Sigma_2 + (1 - \alpha) \beta B^{\Tsf} B \succ 0$, then $\hat H \succ 0$. If  $T + \Sigma_2 + (1 - \alpha) \beta B^{\Tsf} B \succ 0$ and $S  + \Sigma_1 \succeq 0$, then $\Ghat \succeq 0$.
\end{proposition}
}

\section{Convergence of iPSPR }\label{section:convergence}

In this section, we first show that a sequence related to $\{w_k\}$ generated by iPSPR \eqref{equ:sP-PRSM} is strictly contractive \rev{in section \ref{subsection:contraction}} and then establish the global convergence of the method \rev{in section \ref{subsection:globalconvergence}, and discuss the choices of the proximal terms in section  \ref{subsetion:proximalterms}}.  \rev{Note that  the contraction property is also \revise{helpful} to establish} the convergence rate in the nonergodic sense.
\subsection{Contraction analysis}\label{subsection:contraction}

\rev{To establish the strictly contractive property of the sequence $\{\Phi_{\alpha, \gamma}^k (w^*) \}$ (see \eqref{equ:def:Phi} for the definition)}, we first give a rough estimation of \rev{$\|w^k - w^*\|_{\Ghat}^2 - \|w^{k+1} - w^*\|_{\Ghat}^2$} based on the optimality conditions of \eqref{equ:sP-PRSM1}  and \eqref{equ:sP-PRSM3}.

\begin{lemma} \label{lemma:vkvk1}
Let the sequence $\{w^k\}$ be generated by iPSPR \eqref{equ:sP-PRSM}. If we choose \rev{$(\alpha, \gamma) \in \dom$}, \rev{then there holds that}
\begin{align}\label{equ:lemma:vkvk1}
& \rev{\|w^k - w^*\|_{\Ghat}^2 - \|w^{k+1} - w^*\|_{\Ghat}^2}\nn \\ \geq{} &\rev{\|x^k - x^{k+1}\|_{S + \frac12 \Sigma_1}^2 + \|y^k - y^{k+1}\|_{T + \frac12 \Sigma_2}^2}
 +  (1- \alpha) \beta  \|B(y^k - y^{k+1})\|^2 \nn \\
&  + (2 - \alpha - \gamma) \beta\|r^{k+1} \|^2
  + 2(1 - \alpha)\beta \llang r^{k+1}, B(y^k - y^{k+1}) \rrang 
\end{align}
\rev{and}
 \begin{align} \label{equ:wkw:wk1:w:G}
& \rev{\|w^k - w^*\|_{\Ghat}^2 - \|w^{k+1} - w^*\|_{\Ghat}^2} \nn\\
\geq{} & \rev{\|x^k - x^{k+1}\|_{S + \frac12 \Sigma_1}^2 + \|y^k - y^{k+1}\|_{T+ \frac12 \Sigma_2}^2} \nn \\
 & \rev{+ \frac{\alpha^2(1 - \gamma) + \gamma^2 (1 - \alpha)}{(\alpha + \gamma)^2}\beta  \|B(y^k - y^{k+1})\|^2  + \frac{2 - \alpha - \gamma}{(\alpha + \gamma)^2 \beta} \left\|\lambda^k - \lambda^{k+1} \right\|^2}  \nn \\
 & +  \frac{2(\gamma - \alpha)}{(\alpha + \gamma)^2} \llang B(y^k - y^{k+1}), \lambda^k - \lambda^{k+1} \rrang.
\end{align}
\end{lemma}

\begin{proof}
\rev{The proof of \eqref{equ:lemma:vkvk1} consists of three steps.}

 \rev{I).  We give a rough lower bound estimation of the term  $\|w^{k} - w\|_{\Ghat}^2 - \|w^{k+1} - w\|_{\Ghat}^2$.  Following from the first equality of \eqref{equ:GhatH_norm}, we have
\be \label{equ:wk-w:Ghat_wk1-w:Ghat}
\|w^{k} - w\|_{\Ghat}^2 - \|w^{k+1} - w\|_{\Ghat}^2 = \|w^{k} - w\|_{G}^2 - \|w^{k+1} - w\|_{G}^2 + \|u^k - u\|_{\Sigma}^2  - \|u^{k+1} - u\|_{\Sigma}^2.
\ee
The Cauchy-Schwartz inequality ensures
$\|u^k - u\|_{\Sigma}^2  + \|u^{k+1} - u\|_{\Sigma}^2\geq \frac12 \|u^k - u^{k+1}\|_{\Sigma}^2$. Thus, we have
\[
\|u^k - u\|_{\Sigma}^2  - \|u^{k+1} - u\|_{\Sigma}^2 \geq \frac12 \|u^k - u^{k+1}\|_{\Sigma}^2 - 2\|u^{k+1} - u\|_{\Sigma}^2. \label{equ:lemma:ukuk1}
\]
 Using the identity $\|a\|_G^2 - \|b\|_G^2 = \|a - b \|_G^2 + 2b^{\Tsf}G (a - b)$  with $a = w - w^k$ and $b = w - w^{k+1}$, we have
\begin{align}
\|w^k - w\|_G^2 - \|w^{k+1} - w\|_G^2 =  \|w^k - w^{k+1} \|_G^2 + 2 (w - w^{k+1})^{\Tsf} G (w^{k+1} - w^k).  \label{equ:lemma:vkvk1:a2}
\end{align}
Substituting \eqref{equ:lemma:ukuk1} and \eqref{equ:lemma:vkvk1:a2} into  \eqref{equ:wk-w:Ghat_wk1-w:Ghat}, and using \eqref{equ:GH_norm} and \eqref{equ:lemma:vkvk1:b1},  we have that for any $w \in \Omega$}
 \begin{align}
 & \|w^k - w\|_{\Ghat}^2 - \|w^{k+1} - w\|_{\Ghat}^2\nn \\
  \geq{}& \rev{\|x^k - x^{k+1}\|_{S + \frac12 \Sigma_1}^2 + \|y^k - y^{k+1}\|_{T + \frac12 \Sigma_2}^2 +   (1- \alpha) \beta  \|B(y^k - y^{k+1})\|^2 + (\alpha + \gamma) \beta \|r^{k+1}\|^2}  \nn\\
  & \rev{+ 2 (w - w^{k+1})^{\Tsf} G (w^{k+1} - w^k) - 2\|u^{k+1} - u\|_{\Sigma}^2.} \label{equ:lemma:vkvk1:a3}
 \end{align}

\rev{II). We focus on the estimation of $(w - w^{k+1})^{\Tsf}  G (w^{k+1}-w^{k})$. 
From the optimality conditions of \eqref{equ:sP-PRSM1} and \eqref{equ:sP-PRSM3}, we know that there exist $\xi_x^{k+1} \in \partial \theta_1(x^{k+1})$ and $\xi_y^{k+1} \in \partial \theta_2(y^{k+1})$ such that}
\be \label{equ:opt:x:0}
\llang x - x^{k+1},  {\Pone}(x^{k+1} - x^k) + \xi_x^{k+1}-A^{\Tsf}\la^k + \beta  A^{\Tsf} \rev{(A x^{k +1} + B y^k - b)} \rrang \geq 0, \quad \forall x \in \X\nn
\ee
and
\[ \label{equ:opt:y:0}
\llang y - y^{k+1}, {\Ptwo}(y^{k+1} - y^k) +  \xi_y^{k+1}-B^{\Tsf}\la^{k + \frac12} + \beta  B^{\Tsf} r^{k+1} \rrang \geq 0, \quad \forall y \in \Y.
\]
\rev{Substituting \eqref{equ:lak:lak1} into \eqref{equ:opt:x:0} and noting $r^{k+1} = A x^{k+1} + B y^{k+1} - b$, we have}
\begin{align}
& \langle x - x^{k+1}, {\Pone}(x^{k+1} - x^k)  + \xi_x^{k+1} -A^{\Tsf}\la^{k+1} + (1 - \alpha - \gamma) \beta  A^{\Tsf} r^{k+1}\nn\\
& + (1 - \alpha) \beta  A^{\Tsf} B (y^k -y^{k+1} )\rangle  \geq 0, \qquad
 \forall x \in \X.  \label{equ:opt:x}
\end{align}
\rev{Substituting  $\la^{k+\frac12} = \la^{k+1} + \gamma\beta r^{k+1}$ into \eqref{equ:opt:y:0}, we have}
\[
\llang y - y^{k+1}, {\Ptwo}(y^{k+1} - y^k) +   \xi_y^{k+1} -B^{\Tsf}\la^{k + 1} + (1 - \gamma) \beta  B^{\Tsf} r^{k+1}\rrang  \geq 0, \quad \forall y \in \Y.  \label{equ:opt:y}
\]
Rewrite \eqref{equ:lak:lak1} to
\be
r^{k+1} - \frac{\alpha}{\alpha + \gamma} B(y^{k+1} - y^{k}) + \frac{1}{(\alpha + \gamma)\beta} (\la^{k+1} - \la^k) = 0. \label{equ:opt:la}
\ee
Combing \eqref{equ:opt:x}, \eqref{equ:opt:y} and \eqref{equ:opt:la} in a suitable way, and recalling the \revise{definitions} of $w$ and $F(\cdot)$,    for \rev{any $w \in \Omega$} there holds that
\begin{align}\label{equ:lemma:opt:d1}
&
 \llang w - w^{k+1},
\begin{pmatrix}
{\Pone}(x^{k+1} - x^k) \\
{\Ptwo}(y^{k+1} - y^k) \\
0
\end{pmatrix}
+
\begin{pmatrix}
0 \\
\alpha \beta B^{\Tsf}r^{k+1} + (1 - \alpha) \beta  B^{\Tsf} B (y^{k+1} - y^{k})\\
\displaystyle - \frac{\alpha}{\alpha + \gamma} B(y^{k+1} - y^{k}) + \frac{1}{(\alpha + \gamma)\beta} (\la^{k+1} - \la^k)
\end{pmatrix}
\rrang
\nn \\
\geq &
\left\langle w^{k+1} - w,
\begin{pmatrix}
  A^{\Tsf} \\
  B^{\Tsf} \\
0
\end{pmatrix}
\left[(1 - \alpha - \gamma) \beta   r^{k+1} + (1 - \alpha) \beta   B (y^k -y^{k+1} ) \right]
\rrang \nn
\\
& + \llang
w^{k+1} - w,
\rev{F(w^{k+1}, \xi_x^{k+1},  \xi_y^{k+1})}
\rrang.
\end{align}
With the assertion \eqref{equ:opt:la}, \rev{we have $\alpha \beta B^{\Tsf}r^{k+1} + (1 - \alpha) \beta  B^{\Tsf} B (y^{k+1} - y^{k}) = \frac{\alpha + \gamma - \alpha \gamma}{\alpha + \gamma}\beta B^{\Tsf} B (y^{k+1} - y^k) - \frac{\alpha}{\alpha + \gamma} B^{\Tsf} (\lambda^{k+1} - \lambda^k)$. Using} the definition \eqref{equ:H} of $H$, the definition \eqref{equ:G} of $G$ and the definition of $r^{k+1}$ and $r(w)$,
we can rewrite \eqref{equ:lemma:opt:d1} as
\begin{align}\label{equ:lemma:opt:01}
(w - w^{k+1})^{\Tsf}  G (w^{k+1}-w^{k})
\ge{} &\llang r^{k+1} -r(w), (1 - \alpha - \gamma) \beta   r^{k+1} + (1 - \alpha) \beta   B (y^k -y^{k+1} ) \rrang \nn \\ &+ \llang w^{k+1} - w, \rev{F(w^{k+1}, \xi_x^{k+1},  \xi_y^{k+1})} \rrang.
\end{align}
Noting that  $r(w) = 0$ for any $w \in \D$, we have from \eqref{equ:lemma:opt:01} that \rev{for any $w \in \D$}
\begin{align}\label{equ:lemma:opt:0}
(w - w^{k+1})^{\Tsf}  G (w^{k+1}-w^{k})
\ge{}  &(1 - \alpha - \gamma) \beta\|r^{k+1} \|^2  + (1 - \alpha)\beta \llang r^{k+1}, B(y^k - y^{k+1})\rrang \nn \\
 & + \llang w^{k+1} - w, \rev{F(w^{k+1}, \xi_x^{k+1},  \xi_y^{k+1})} \rrang.
\end{align}

\rev{III). Plugging \eqref{equ:lemma:opt:0} into \eqref{equ:lemma:vkvk1:a3}, we have \rev{for any $w \in \D$}
\begin{align}\label{equ:lemma:vkvk1:00}
& \rev{\|w^k - w\|_{\Ghat}^2 - \|w^{k+1} - w\|_{\Ghat}^2}\nn\\
 \geq{} &\rev{\|x^k - x^{k+1}\|_{S + \frac12 \Sigma_1}^2 + \|y^k - y^{k+1}\|_{T + \frac12 \Sigma_2}^2}
 +  (1- \alpha) \beta  \|B(y^k - y^{k+1})\|^2 \nn \\
&  + (2 - \alpha - \gamma) \beta\|r^{k+1} \|^2
  + 2(1 - \alpha)\beta \llang r^{k+1}, B(y^k - y^{k+1}) \rrang + \rev{\Delta(w^{k+1}, w)}, 
\end{align}
where $\rev{\Delta(w^{k+1}, w) := 2 \llang w^{k+1} - w, \rev{F(w^{k+1}, \xi_x^{k+1},  \xi_y^{k+1})} \rrang  - 2\|u^{k+1} - u\|_{\Sigma}^2.}$ Taking $w = w^{k+1}$ and $w' = w^*$ in \eqref{equ:F:mono}, we have from  \eqref{equ:F:mono}  that
\begin{align}
 \llang w^{k+1} {-} w^*, F(w^{k+1}, \xi_x^{k+1}, \xi_y^{k+1}) \rrang \geq \llang w^{k+1} {-} w^*, F(w^*, \xi_x^*, \xi_y^*) \rrang + \|u^{k+1} {-} u^* \|_{\SigT}^2 \geq \|u^{k+1} - u^*\|_{\SigT}^2, \nn
\end{align}
where $\xi_x^* \in \partial \theta_1(x^*)$, $\xi_y^* \in \partial \theta_2(y^*)$ and the second inequality is due to the optimality condition \eqref{equ:opt:cond:D} of $w^*$. \rev{This  further means that $\Delta(w^{k+1}, w^*) \geq 0$.} Setting $w = w^*$ in \eqref{equ:lemma:vkvk1:00}, we have  \eqref{equ:lemma:vkvk1}.

\rev{The proof of  \eqref{equ:wkw:wk1:w:G} follows directly from \eqref{equ:lemma:vkvk1} and $r^{k+1} = \frac{\alpha}{\alpha + \gamma} B(y^{k+1} - y^{k}) - \frac{1}{(\alpha + \gamma)\beta} (\la^{k+1} - \la^k)$ which comes from \eqref{equ:opt:la}.}
The proof is completed.}
\end{proof}

 \rev{We now need to  give a  careful estimation of the crossing term $\left\langle r^{k+1}, B(y^k - y^{k+1})\right\rangle$, which is useful to establish the strictly contractive property of $\{\Phi_{\alpha, \gamma}^k (w^*)\}$ when $(\alpha, \gamma) \in \dom_1 \cup \dom_2$  \rev{(see \eqref{equ:D1:D2} for the definition)}.}

\begin{lemma} \label{lemma:AxBy-b}
Let the sequence $\{w^k\}$ be generated by iPSPR \eqref{equ:sP-PRSM}. If $\alpha \geq 0$ and $\gamma >0$, then \rev{there holds that}
\begin{align}\label{equ:lemma:AxBy-b}
 & \left\langle r^{k+1}, B(y^k - y^{k+1})\right\rangle\nn\\
   \geq{} & \frac{1 - \gamma}{1 + \alpha}  \left\langle r^{k}, B(y^k - y^{k+1}) \right\rangle  - \frac{\alpha}{1 + \alpha} \|B(y^k - y^{k+1})\|^2  \rev{+ \frac{1}{1 + \alpha}\cdot \frac{1}{\beta}   \| y^{k} - y^{k+1} \|_{- 2T_{-} + \Sigma_2}^2}\nn\\
& + \frac{1}{2(1 + \alpha)}\cdot \frac{1}{\beta}   \left(\| y^{k} - y^{k+1} \|_{T_+ + T_{-}}^2 - \|y^{k-1} - y^{k}\|_{T_{+} + T_{-}}^2\right).
\end{align}
\end{lemma}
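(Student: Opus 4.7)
The plan is to exploit the optimality condition \eqref{equ:opt:y} of the $y$-subproblem at two consecutive iterations and combine them cleverly. Specifically, I would write \eqref{equ:opt:y} at iteration $k+1$ (taking $y=y^k$ as the test vector) and at iteration $k$ (taking $y=y^{k+1}$ as the test vector), obtaining two inequalities involving subgradients $\xi_y^{k+1}$, $\xi_y^{k}$, the dual multipliers $\lambda^{k+1}$, $\lambda^{k}$, and the residuals $r^{k+1}$, $r^{k}$. Adding them kills the free direction $y^k-y^{k+1}$ and leaves an inequality in which the subgradient cross-term
$(y^k-y^{k+1})^{\Tsf}(\xi_y^{k+1}-\xi_y^k)$ is nonpositive by the monotonicity of $\partial\theta_2$, hence can be safely dropped to yield a valid lower bound.

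Next I would replace the difference $\lambda^k-\lambda^{k+1}$, which appears via the $-B^{\Tsf}\lambda$ terms, by the identity \eqref{equ:lak:lak1}, namely
\[
\lambda^k-\lambda^{k+1}=(\alpha+\gamma)\beta\, r^{k+1}+\alpha\beta\, B(y^k-y^{k+1}).
\]
This is the step that produces the key expression $(\alpha+\gamma)\beta\langle r^{k+1},B(y^k-y^{k+1})\rangle+\alpha\beta\|B(y^k-y^{k+1})\|^2$. Combining with the $(1-\gamma)\beta\langle r^{k+1}-r^k,B(y^k-y^{k+1})\rangle$ piece that already appeared, the coefficient of $\langle r^{k+1},B(y^k-y^{k+1})\rangle$ collapses to $(\alpha+\gamma)+(1-\gamma)=1+\alpha$, which matches exactly the factor on the left-hand side of \eqref{equ:lemma:AxBy-b} after dividing through by $(1+\alpha)\beta$.

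It remains to treat the $\Ptwo$-cross-term $(y^k-y^{k+1})^{\Tsf}\Ptwo(y^k-y^{k-1})$ that comes from the quadratic proximal contributions ${\Ptwo}(y^{k+1}-y^k)$ and ${\Ptwo}(y^{k}-y^{k-1})$. Writing $a=y^k-y^{k+1}$ and $b=y^k-y^{k-1}$, I would use the perfect-square identity
\[
\tfrac12\|a\|_{\Ptwo}^2+a^{\Tsf}\Ptwo b+\tfrac12\|b\|_{\Ptwo}^2=\tfrac12\|a+b\|_{\Ptwo}^2\ge 0,
\]
valid because $\Ptwo\succeq 0$. Equivalently,
$a^{\Tsf}\Ptwo b\ge -\tfrac12\|a\|_{\Ptwo}^2-\tfrac12\|b\|_{\Ptwo}^2$, which converts the raw $\|y^{k+1}-y^k\|_{\Ptwo}^2+(y^k-y^{k+1})^{\Tsf}\Ptwo(y^k-y^{k-1})$ bound into the telescoping quantity
$\tfrac12\bigl(\|y^{k+1}-y^k\|_{\Ptwo}^2-\|y^k-y^{k-1}\|_{\Ptwo}^2\bigr)$ that appears on the right of \eqref{equ:lemma:AxBy-b}. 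Dividing through by $(1+\alpha)\beta$ yields the stated inequality.

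The main obstacle is identifying that one must test the two optimality conditions at each other's iterates (so that the subgradient term becomes a monotonicity product) and then recognising the square-completion that produces a clean telescoping $\Ptwo$-difference rather than a messy mixed bound; once those two ingredients are in hand, the remaining manipulation is purely algebraic book-keeping of the coefficients of $r^{k+1}$, $r^k$, and $\|B(y^k-y^{k+1})\|^2$.
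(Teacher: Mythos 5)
Your proposal is correct and follows essentially the same route as the paper: write the $y$-subproblem optimality condition \eqref{equ:opt:y} at iterations $k$ and $k+1$, test each at the other's iterate, add, drop the nonnegative monotonicity term $\llang y^{k+1}-y^k,\xi_y^{k+1}-\xi_y^k\rrang$, substitute $\la^k-\la^{k+1}$ via \eqref{equ:lak:lak1} to produce the $(1+\alpha)\beta$ coefficient, and bound the $\Ptwo$ cross-term by the square-completion $a^{\Tsf}\Ptwo b\ge -\tfrac12\|a\|_{\Ptwo}^2-\tfrac12\|b\|_{\Ptwo}^2$ to get the telescoping difference. All coefficients check out, so no substantive differences to report.
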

\begin{proof}
\rev{From the optimality conditions of  \eqref{equ:sP-PRSM3} with $k := k-1$,  we know that there exist $\xi_y^{k} \in \partial \theta_2(y^{k})$ such that}
\[\label{equ:lemma:AxBy-b:a0}
 \llang y - y^{k}, {\Ptwo}(y^k - y^{k-1}) + \xi_y^k -B^{\Tsf}\la^{k} + (1 - \gamma) \beta  B^{\Tsf} r^k \rrang \geq 0, \quad \forall y \in \Y.
\]
Setting $y$ to be $y^k$ and $y^{k+1}$ in \eqref{equ:opt:y} respectively, and \eqref{equ:lemma:AxBy-b:a0} and then rearranging the obtained inequalities suitably,  we have that
\[\label{equ:lemma:AxBy-b:a1}
\llang B(y^k - y^{k+1}),  - \la^{k + 1} + (1 - \gamma) \beta   r^{k+1} \rrang \geq \| y^k - y^{k+1} \|_{\Ptwo}^2 - \llang y^k - y^{k+1}, \xi_y^{k+1} \rrang
\]
and
\[\label{equ:lemma:AxBy-b:a2}
\llang B( y^{k} - y^{k+1}),  \la^{k} - (1 - \gamma) \beta r^k \rrang \geq - \llang y^k - y^{k+1}, {\Ptwo}(y^{k-1} - y^{k})\rrang + \llang y^{k} - y^{k+1}, \xi_y^k \rrang.
\]
Summing \eqref{equ:lemma:AxBy-b:a1} and \eqref{equ:lemma:AxBy-b:a2} over the both sides \rev{yields}
\begin{align}\label{equ:lemma:AxBy-b:d1}
& \llang B( y^{k} - y^{k+1}),  \la^{k} - \la^{k+1} \rrang +  (1 - \gamma) \beta \llang B( y^{k} - y^{k+1}),   r^{k+1} \rrang   - (1 - \gamma)\beta \llang B(y^k - y^{k+1}), r^k \rrang \nn \\
\geq{} & 
\| y^{k} - y^{k+1} \|_{\Ptwo}^2  - \llang y^{k} - y^{k+1}, {\Ptwo}(y^{k-1} - y^{k})\rrang  + \llang y^{k} - y^{k+1}, \xi_y^{k} - \xi_y^{k+1} \rrang.
\end{align}
Recalling that $T = T_{+} - T_{-}$, we know from the Cauchy-Schwarz inequality that
\begin{align}
- \llang y^{k} - y^{k+1}, T (y^{k-1} - y^{k})\rrang ={} &  - \llang y^{k} - y^{k+1}, T_{+}(y^{k-1} - y^{k})\rrang + \llang y^{k} - y^{k+1}, T_{-} (y^{k-1} - y^{k})\rrang \nn \\
\geq{} &  -\frac12 \|y^k - y^{k+1}\|_{T_{+} + T_{-}}^2   -\frac12 \|y^{k-1} - y^{k}\|_{T_{+} + T_{-}}^2,\nn
\end{align}
which with \eqref{equ:y:convex}  implies that
\begin{align}
\rev{\mbox{RHS\ of}\  \eqref{equ:lemma:AxBy-b:d1}} \geq{} &
\rev{
\frac12  \left(\| y^{k} - y^{k+1} \|_{T_{+} + T_{-}}^2 - \|y^{k-1} - y^{k}\|_{T_{+} + T_{-}}^2\right) +
\| y^{k} - y^{k+1} \|_{-2T_{-} + \Sigma_2}^2
}.\nn
\end{align}
\rev{This with  relations \eqref{equ:lak:lak1} and  \eqref{equ:lemma:AxBy-b:d1} implies that  \eqref{equ:lemma:AxBy-b}}. The proof is completed.
\end{proof}

 \rev{We now decompose the domain $\dom$ (see \eqref{stepsize} for its definition) as $\dom = \dom_1 \cup \dom_2 \cup \dom_3 \cup \dom_4$ with
\begin{align} \label{equ:D1:D2}
&\dom_1 = \left\{(\alpha, \gamma) : 0\leq \alpha < 1< \gamma <  \frac{1 - \alpha + \sqrt{(1+\alpha)^2 + 4(1 - \alpha^2)}}{2}\right\}, \nn\\ &\dom_2 = \{(\alpha, \gamma): 0 \leq \alpha < 1, \gamma = 1\}
\end{align}
and
\be
 \dom_3 = \{(\alpha, \gamma) : 0\leq \alpha < 1, 0 \leq \gamma < 1,  \alpha + \gamma > 0, \alpha \neq \gamma\},  \quad
 \dom_4 = \{(\alpha, \gamma): 0 < \alpha = \gamma < 1\}. \nn
 \ee
For  a given $w \in \D$, we define $\Phi_{\alpha, \gamma}^k (w)$  as
   \be \label{equ:def:Phi}
  \Phi_{\alpha, \gamma}^k (w) :=  \|w^k - w\|_{\Ghat}^2 + \rho_1^{\alpha, \gamma} \|y^{k-1} - y^k\|_{T_+ + T_-}^2 + \rho_2^{\alpha, \gamma} \beta \|r^k\|^2,
   \ee
 where the constants
  \be  \label{equ:constants:rho}
  \rho_1^{\alpha, \gamma} =
   \begin{cases}
 \displaystyle  \frac{1 - \alpha}{1 + \alpha} & (\alpha, \gamma) \in \dom_1,\\[2mm]
\displaystyle    \frac{1 - \alpha}{2(1 + \alpha)} &  (\alpha, \gamma) \in \dom_2,\\
    0 &  (\alpha, \gamma) \in \dom_3 \cup \dom_4,\\
      \end{cases}\quad
 \rho_2^{\alpha, \gamma} =
   \begin{cases}
\displaystyle  \frac{(\gamma - 1)^2}{(1 - c^{\alpha, \gamma})( 1 + \alpha)} &  (\alpha, \gamma) \in \dom_1,\\
  0 &  (\alpha, \gamma) \in \dom_2 \cup \dom_3 \cup \dom_4,\\
      \end{cases}
   \ee
  in which the constant  $c^{\alpha, \gamma}$ is defined as
  $$
  c^{\alpha,\gamma} \in   \begin{cases}
\displaystyle \left(0, \frac{1 - \alpha^2 + \alpha - (\alpha - 1) \gamma  - \gamma^2}{(2 - \alpha - \gamma)(1 + \alpha)}\right) &  (\alpha, \gamma) \in \dom_1,\\
  (0,1) &  (\alpha, \gamma) \in \dom_2 \cup \dom_3.\\
      \end{cases}
  $$
}

\rev{We are now ready to have the following theorem.}

\begin{theorem}\label{thm:wk:wk1:contraction}
Given $w^* \in \Omega^*$, let the sequence $\{w^k\}$ be generated by iPSPR \eqref{equ:sP-PRSM}. If we choose \rev{$(\alpha, \gamma) \in \dom$}, then  there holds that
\begin{align}\label{equ:wk:wk1:w*}
 \Phi_{\alpha, \gamma}^k (w^*) - \Phi_{\alpha, \gamma}^{k+1} (w^*)     \geq{} &    \|x^k - x^{k+1}\|_{S + \frac12 \Sigma_1}^2 + \|y^k - y^{k+1}\|_{T + \frac12 \Sigma_2 + \kappa_1^{\alpha, \gamma} (-2 T_- + \Sigma_2)+ \kappa_2^{\alpha, \gamma} \beta B^{\Tsf} B}^2  \nn \\
  &    +  \frac{ \kappa_3 ^{\alpha,\gamma}}{\beta}\|\lambda^k - \lambda^{k+1} \|^2  +  \kappa_4 ^{\alpha,\gamma} \|r^{k+1} \|^2,
  \end{align}
  \rev{where the constants
   \be \label{equ:constants:kappa}
   \kappa_1^{\alpha, \gamma} =
   \begin{cases}
\displaystyle  \frac{2(1 - \alpha)}{1 + \alpha}  & (\alpha, \gamma) \in \dom_1,\\[2mm]
 \displaystyle    \frac{1 - \alpha}{1 + \alpha} &  (\alpha, \gamma) \in \dom_2,\\
    0 & (\alpha, \gamma) \in \dom_3 \cup \dom_4,
      \end{cases}
      \quad
       \kappa_2^{\alpha, \gamma} =
   \begin{cases}
\displaystyle \frac{c^{\alpha,\gamma} (1- \alpha)^2}{1 + \alpha}  & (\alpha, \gamma) \in \dom_1,\\
 \displaystyle   \frac{c^{\alpha,\gamma}(1-\alpha)(3 - \alpha)}{4(1 + \alpha)} &  (\alpha, \gamma) \in \dom_2,\\
 \displaystyle    \frac{c^{\alpha,\gamma}(1 - \alpha)(1 - \gamma) }{(2 - \gamma - \alpha)} & (\alpha, \gamma) \in \dom_3,\\
 \displaystyle    \frac{1 - \alpha}{2} & (\alpha, \gamma) \in \dom_4,
      \end{cases}
   \ee
 and
   \be
     \kappa_3^{\alpha, \gamma} =
   \begin{cases}
  0 & (\alpha, \gamma) \in \dom_1\cup \dom_2,\\
 \displaystyle  \frac{(1 - c^{\alpha,\gamma}) (1 - \alpha) (1 - \gamma) (2 - \alpha - \gamma)}{(\gamma - \alpha)^2 + (1 - c^{\alpha,\gamma}) (1 - \alpha) (1 - \gamma) (\alpha + \gamma)^2} &  (\alpha, \gamma) \in \dom_3, \\[2mm]
 \displaystyle   \frac{1 - \alpha}{2 \alpha^2} &  (\alpha, \gamma) \in \dom_4,
      \end{cases}
        \ee
         and
  \be
  \kappa_4^{\alpha, \gamma} =
   \begin{cases}
 \displaystyle   2 - \alpha - \gamma - \frac{(\gamma - 1)^2}{(1 - c^{\alpha,\gamma})(1 + \alpha)}  & (\alpha, \gamma) \in \dom_1,\\[2mm]
\displaystyle   \frac{(1-c^{\alpha, \gamma})(1-\alpha)(3 - \alpha)}{(1 + \alpha) + (1 - c^{\alpha, \gamma})(3 - \alpha)}&  (\alpha, \gamma) \in \dom_2, \\
   0 & (\alpha, \gamma) \in \dom_3 \cup \dom_4.
      \end{cases}
  \ee
}
\end{theorem}

\begin{proof}
%
%
\rev{
%
We consider four cases.

\rev{I).~ $(\alpha, \gamma)\in \dom_1$.} By combining \eqref{equ:lemma:AxBy-b} and \eqref{equ:lemma:vkvk1}, we derive
\begin{align}
&\left(\rev{\|w^k - w^*\|_{\Ghat}^2}  + \frac{1 - \alpha}{1 + \alpha} \|y^{k-1} - y^{k} \|_{T_+ + T_{-}}^2\right)- \left(\rev{\|w^{k+1} - w^*\|_{\Ghat}^2} + \frac{1 - \alpha}{1 + \alpha} \|y^{k} - y^{k+1}\|_{T_+ + T_-}^2 \right)  \nn\\
\geq{} & \rev{\|x^k - x^{k+1}\|_{S + \frac12 \Sigma_1}^2 + \|y^k - y^{k+1}\|_{T + \frac12 \Sigma_2}^2} + \frac{2(1 - \alpha)}{1 + \alpha} \|y^k - y^{k+1}\|_{-2T_- + \Sigma_2}^2\nn \\
& +  \frac{(1- \alpha)^2}{1 + \alpha} \beta  \|B(y^{k} - y^{k+1})\|^2 + (2 - \alpha - \gamma) \beta\|r^{k+1} \|^2
   \rev{- 2(\gamma-1)}\frac{1 - \alpha}{1 + \alpha} \beta \llang r^{k}, B(y^k - y^{k+1}) \rrang.
\label{equ:vkvk1:0}
\end{align}
\rev{Note that in this case $0< c^{\alpha, \gamma}< \frac{1 - \alpha^2 + \alpha - (\alpha - 1) \gamma  - \gamma^2}{(2 - \alpha - \gamma)(1 + \alpha)} < 1$}, with the Cauchy-Schwarz inequality, we have
\[
\rev{- 2 \llang r^{k}, B(y^k - y^{k+1}) \rrang \geq  -  \frac{\gamma - 1}{(1 - \alpha)(1 - c^{\alpha, \gamma})}\cdot\|r^{k}\|^2 - \frac{(1 - \alpha)(1 - c^{\alpha, \gamma})}{\gamma - 1}\cdot\|B(y^k - y^{k+1})\|^2.} \nn
\]
Plugging the above inequality into \eqref{equ:vkvk1:0}, \rev{we obtain \eqref{equ:wk:wk1:w*} in this case}.

\rev{II). \ $(\alpha, \gamma)\in \dom_2$. For this case, \eqref{equ:lemma:AxBy-b} reduces to
\begin{align}\label{equ:lemma:AxBy-b:2}
\left\langle r^{k+1}, B(y^k - y^{k+1})\right\rangle  \geq{} &   - \frac{\alpha}{1 + \alpha} \|B(y^k - y^{k+1})\|^2  \rev{+ \frac{1}{(1 + \alpha)\beta}  \| y^{k} - y^{k+1} \|_{- 2T_{-} + \Sigma_2}^2}\nn\\
& + \frac{1}{2(1 + \alpha)\beta}    \left(\| y^{k} - y^{k+1} \|_{T_+ + T_{-}}^2 - \|y^{k-1} - y^{k}\|_{T_{+} + T_{-}}^2\right).
\end{align}
On the other hand, by the Cauchy-Schwartz inequality, we have
\be\label{equ:lemma:AxBy-b:3}
\left\langle   r^{k+1}, B(y^k - y^{k+1})\right\rangle \geq  - \delta   \|B(y^k - y^{k+1})\|^2 -  \frac{1}{4\delta} \|r^{k+1}\|^2,
\ee
where $\delta = \frac{(1 + \alpha)+ (1 - c^{\alpha, \gamma})(3 - \alpha)}{4(1 + \alpha)}$.
Combing \eqref{equ:lemma:AxBy-b:2} and \eqref{equ:lemma:AxBy-b:3}, and using \eqref{equ:lemma:vkvk1}, we have
 \begin{align} \label{equ:wk_w:new}
&\left(\rev{\|w^k - w^*\|_{\Ghat}^2}  + \frac{1 - \alpha}{2(1 + \alpha)} \|y^{k-1} - y^{k} \|_{T_+ + T_{-}}^2\right)\nn\\
& - \left(\rev{\|w^{k+1} - w^*\|_{\Ghat}^2} + \frac{1 - \alpha}{2(1 + \alpha)}  \|y^{k} - y^{k+1}\|_{T_+ + T_-}^2 \right)  \nn\\
\geq & \ \rev{\|x^k - x^{k+1}\|_{S + \frac12 \Sigma_1}^2 + \|y^k - y^{k+1}\|_{T + \frac12 \Sigma_2}^2} +  \frac{1 - \alpha}{1 + \alpha}\|y^{k} - y^{k+1}\|_{-2T_{-} + \Sigma_2}^2  \nn \\
&  +   \frac{c^{\alpha, \gamma}(1-\alpha)(3 - \alpha)}{4(1 + \alpha)}\beta  \|B(y^k - y^{k+1})\|^2 +
\revise{\frac{(1-c^{\alpha, \gamma})(1-\alpha)(3 - \alpha)}{(1 + \alpha) + (1 - c^{\alpha, \gamma})(3 - \alpha)}}\|r^{k+1}\|^2.
\end{align}
This means \eqref{equ:wk:wk1:w*} holds in this case.}

\rev{III).~ $(\alpha, \gamma) \in \dom_3$}.
Noting that $c^{\alpha, \gamma} \in (0,1)$ and letting $\tilde c = \frac{(\gamma - \alpha)^2}{(\gamma - \alpha)^2 + ( 1 - c^{\alpha, \gamma})(1 - \alpha)(1 - \gamma)(\alpha + \gamma)^2}$, we have from the Cauchy-Schwarz inequality that
\be
\rev{2(\gamma {-} \alpha) \llang B(y^k {-} y^{k+1}), \lambda^k - \lambda^{k+1} \rrang  \geq  -  \frac{(\alpha - \gamma)^2\beta}{\tilde c(2 {-} \gamma {-} \alpha)} \|B(y^k - y^{k+1})\|^2 -   \frac{\tilde c(2 {-} \gamma {-} \alpha)}{\beta} \|\lambda^k - \lambda^{k+1}\|^2}, \nn
\ee
which with \eqref{equ:wkw:wk1:w:G}  and the equality $[\alpha^2(1 - \gamma) + \gamma^2(1 - \alpha)](2 - \alpha - \gamma) = (\gamma - \alpha)^2 + (1 - \alpha)(1 - \gamma)(\alpha + \gamma)^2$ implies that   \eqref{equ:wk:wk1:w*} holds in this case.

\rev{IV).~ $(\alpha, \gamma) \in \dom_4$.  \rev{Note that $\alpha = \gamma$ in this case.} It is easy to see from \eqref{equ:wkw:wk1:w:G} that
 \begin{align}
&   \|w^k - w^*\|_{\Ghat}^2 - \|w^{k+1} - w^*\|_{\Ghat}^2 \nn \\
  \geq{} & \rev{\|x^k - x^{k+1}\|_{S + \frac12 \Sigma_1}^2 + \|y^k - y^{k+1}\|_{T + \frac12 \Sigma_2}^2} +   \frac{1-\alpha}{2}\beta  \|B(y^k - y^{k+1})\|^2  + \frac{1 - \alpha}{2 \alpha^2\beta} \left\|\lambda^k - \lambda^{k+1} \right\|^2,
\end{align}
which means that \eqref{equ:wk:wk1:w*} holds in this case. }
The proof is completed. }
\end{proof}

  \subsection{Global convergence}\label{subsection:globalconvergence}

We are now ready to state the global convergence results formally.

 \begin{theorem} \label{theorem:convergence}
    Let the sequence $\{w^k\}$ be generated by iPSPR \eqref{equ:sP-PRSM}.  If the stepsizes  \rev{$(\alpha, \gamma) \in \dom$} and the proximal terms  $S$, $T$ are chosen such that
 \be \label{equ:condition:S}
 S + \frac12 \Sigma_1 \succeq 0, \quad  \rev{S + \frac12 \Sigma_1 + \beta A^{\Tsf}A \succ 0}
 \ee
and
\be\label{equ:condition:T}
   T + \Sigma_2 + (1 - \alpha) \beta B^{\Tsf} B \succ 0,    \quad  \rev{T + \frac12 \Sigma_2 +   \kappa_1^{\alpha, \gamma} (-2 T_- + \Sigma_2) + \kappa_2^{\alpha, \gamma} \beta B^{\Tsf} B \succ 0},
    \ee
    then $\{w^{k}\}$ converges
 to an optimal solution of \eqref{prob:cp}.
  \end{theorem}
\begin{proof}
The first conditions in \eqref{equ:condition:S} and  \eqref{equ:condition:T} guarantee $\hat G \succeq 0$ and $\hat H \succ 0$, \rev{see Proposition \ref{proposition:H:G}}. We divide the proof into three steps.

I) We show that the sequences $\{w^{k}\}$ is  bounded.
It is straightforward to see from \eqref{equ:wk:wk1:w*}, \eqref{equ:condition:S} and \eqref{equ:condition:T} that \rev{$\Phi_{\alpha, \gamma}^k(w^*)$ is monotone decreasing. This with $T_+, T_{-} \succeq 0$ and the definition \eqref{equ:def:Phi} means that} $\|w^k - w^*\|_{\Ghat}^2$ is bounded. \rev{With the second equality of \eqref{equ:GhatH_norm}, we have} $\|w^k - w^*\|_{\Ghat}^2 = \|x^k - x^*\|_{S + \Sigma_1}^2 + \|v^k - v^*\|_{\hat H}^2$, \rev{which means that  $\|x^k - x^*\|_{\Pone + \Sigt}$ and $\|v^k - v^*\|_{\hat H}$ are all bounded.  Besides, with the positiveness of $\hat H$,  we know that the sequences $\{\lambda^k\}$ and $\{y^k\}$ are bounded.  Following from \eqref{equ:wk:wk1:w*}, \eqref{equ:condition:S} and \eqref{equ:condition:T}, we also have
\be \label{equ:r:limit:01}
\lim_{k \rightarrow \infty} \frac{ \kappa_3 ^{\alpha,\gamma}}{\beta}\|\lambda^k - \lambda^{k+1} \|^2      +  \kappa_4 ^{\alpha,\gamma} \|r^{k+1} \|^2  = 0.
\ee
Noting that  $\kappa_3 ^{\alpha,\gamma} + \kappa_4 ^{\alpha,\gamma} > 0$, with \eqref{equ:lak:lak1} and \eqref{equ:r:limit:01} and the boundness of $y^k$, we can see that $\{r^k\}$ is bounded.}
\rev{With the definition of $r^k$}, we know  that $\|Ax^k - Ax^*\| =  \| r^k - B(y^k - y^*) \| \leq \|r^k \| + \|B(y^k - y^*)\|$, which with the boundness of \rev{$r^k$} and  $y^k$  implies  that $\|x^k - x^*\|_{\beta A^{\Tsf}A}$ is bounded.  Recalling that \rev{$\Pone + \frac12 \Sigt + \beta A^{\Tsf}A \succ 0$} and $\|x^k - x^*\|_{\Pone + \Sigt}$ is bounded, it is safe to say that $\{x^k\}$ is also bounded.

II) We argue that any cluster point  of the sequence $\{w^k\}$ is an optimal solution of \eqref{prob:cp}. Let $\{w^{k_i}\}$ be a subsequence of the sequence $\{w^k\}$ and
$\lim\limits_{k_i \rightarrow \infty} w^{k_i}  = w^{\infty}$. \rev{Following from \eqref{equ:wk:wk1:w*}, \eqref{equ:condition:S} and \eqref{equ:condition:T}, we have
\be \label{equ:r:limit:00}
\lim_{k \rightarrow \infty} \|x^k - x^{k+1}\|_{S + \frac12 \Sigma_1} =  \lim_{k \rightarrow \infty} \|y^k - y^{k+1}\|_{T + \frac12 \Sigma_2 +   \kappa_1^{\alpha, \gamma} (-2 T_- + \Sigma_2) + \kappa_2^{\alpha, \gamma} \beta B^{\Tsf} B}
= 0.
\ee
With the second condition on $T$ in \eqref{equ:condition:T}, we know from the second equality in \eqref{equ:r:limit:00} that
\be  \label{equ:yk:limits}
\lim\limits_{k \rightarrow \infty} \|y^k - y^{k +1}\| = 0.
\ee
Again using  $\kappa_3 ^{\alpha,\gamma} + \kappa_4 ^{\alpha,\gamma} > 0$, with \eqref{equ:lak:lak1} and \eqref{equ:r:limit:01}, it is easy to see that
\be \label{equ:lambdak:rk:limits}
\lim\limits_{k \rightarrow \infty} \|r^k\| = \lim\limits_{k \rightarrow \infty} \|\lambda^k - \lambda^{k+1}\| = 0.
\ee
On the other hand, with the definition of $r^k$, we have $A(x^k - x^{k+1})  = r^k - r^{k+1} - B(y^k - y^{k+1})$. Therefore, we know from \eqref{equ:yk:limits} and \eqref{equ:lambdak:rk:limits} that  $\lim\limits_{k \rightarrow \infty} \|A (x^k - x^{k +1})\| = 0$, which with the first equality in \eqref{equ:r:limit:00} implies $\lim\limits_{k \rightarrow \infty} \|x^k - x^{k +1}\|_{S + \frac12 \Sigma_1 + \beta A^T A} = 0$. This with the second condition on $S$ in \eqref{equ:condition:S} implies
\be \label{equ:xk:limits}
\lim\limits_{k \rightarrow \infty} \|x^k - x^{k +1}\| = 0.
\ee}Since the graphs of $\partial \theta_1(\cdot)$ and $\partial \theta_2(\cdot)$ are both closed, taking the limit with respect $k_i \rightarrow \infty$ on both sides of  \rev{\eqref{equ:lemma:opt:d1}}  and by using  \eqref{equ:yk:limits}, \eqref{equ:lambdak:rk:limits} and \eqref{equ:xk:limits}, we  \rev{know that there exists $\xi_x^{\infty}$ and $\xi_y^{\infty}$ such that }
\[
  (w - w^{\infty})^{\Tsf} \rev{F(w^{\infty}, \xi_x^{\infty}, \xi_y^{\infty})} \geq 0, \quad \forall w \in \D, \nn
\]
which means that $w^{\infty}$ is an optimal solution of \eqref{prob:cp}.

III) We finally prove that the sequence $\{w^k\}$ has only one cluster point.
We first replace $w^*$ with $w^{\infty}$ in the analysis of Steps I) and II).  It follows from $\lim\limits_{k_i \rightarrow \infty} w^{k_i} = w^{\infty}$ \rev{and \eqref{equ:yk:limits}, \eqref{equ:lambdak:rk:limits}} that \rev{$\lim\limits_{k_i \rightarrow \infty}  \Phi_{\alpha, \gamma}^{k_i}(w^{\infty}) = 0$}.  Owing to the \rev{decreasing} monotonicity of the sequence $\Phi_{\alpha, \gamma}^k(w^{\infty})$, we can see that
\[
\lim_{k \rightarrow \infty} \Phi_{\alpha, \gamma}^k(w^{\infty})= 0. \nn
\]
This together with \rev{$T_{+}, T_{-} \succeq 0$ and}  $\|w^k - w^{\infty}\|_{\Ghat}^2 = \|x^k - x^{\infty}\|_{S + \Sigma_1}^2 + \|v^k - v^{\infty}\|_{\hat H}^2$ and $\hat H \succ 0$
 shows that
\[ \label{equ:lim:proof:yishengtanxi}
  \lim_{k \rightarrow \infty} \|x^k - x^{\infty}\|_{\Pone + \Sigt}  = \lim_{k \rightarrow \infty}  \|y^k - y^{\infty}\| = \lim_{k \rightarrow \infty}  \|\lambda^k - \lambda^{\infty} \|.
\]
\rev{With \eqref{equ:lemma:vkvk1:b1}, we further have $\lim_{k \rightarrow \infty} \|r^k\| =  0$}. Using again the inequality $\|Ax^k - Ax^{\infty}\| = \|r^k  - B(y^k - y^{\infty})\| \leq \|r^k \| + \|B(y^k - y^{\infty})\|$, \rev{which with \eqref{equ:lim:proof:yishengtanxi} and \eqref{equ:lambdak:rk:limits} implies}
\[\label{equ:lim:proof:yishengtanxi:2}
\lim_{k \rightarrow \infty} \|A(x^k - x^{\infty})\| = 0.
\]
Combing \eqref{equ:lim:proof:yishengtanxi} and \eqref{equ:lim:proof:yishengtanxi:2}, and using that
$\Pone + \rev{\frac12\Sigt} + \beta A^{\Tsf}A \succ 0$, we immediately have
\[
 \lim_{k \rightarrow \infty} w^k = w^{\infty}. \nn
\]
The proof is completed.\vskip -2mm
\end{proof}
\begin{remark}\rm
If \rev{the condition \eqref{equ:condition:S} is replaced by $S \succeq 0$}, we can have from $\lim\limits_{k \rightarrow \infty}\|x^k - x^{k+1}\|_{S + \frac12 \Sigma_1} = 0$ that $\lim\limits_{k \rightarrow \infty} S(x^k - x^{k+1}) = 0$. Using  similar analysis to the above proof, we can show that $\{v^k\}$ converges to some $v^{*} =  \begin{pmatrix}y^* \\ \lambda^* \end{pmatrix}$, where $w^* =  \begin{pmatrix}x^* \\ v^* \end{pmatrix}$ is an optimal solution of problem \eqref{prob:cp}.
\end{remark}
\subsection{\rev{Choices of proximal terms}} \label{subsetion:proximalterms}

\rev{When the proximal terms $S$ and $T$ satisfy conditions \eqref{equ:condition:S} and \eqref{equ:condition:T}, it is easy to see that the objective functions of subproblems \eqref{equ:sP-PRSM1} and \eqref{equ:sP-PRSM3} are strongly convex, which make the corresponding problems more easier to solve. Note that by allowing $S$ or $T$ indefinite, we can always take a larger step on updating the variable $x$ or $y$. Besides, we next show that for some special cases, with particularly chosen proximal term  $T$, the subproblem   \eqref{equ:sP-PRSM3} is easy to solve or even takes a closed form solution. Note that the discussion for the proximal term $S$ is omitted since it is similar.
}

\rev{
We consider to choose $T$ as
\[\label{equ:T:new}
T = r I - \left(\Sigma_2 + \beta B^{\Tsf} B\right)\quad \mbox{with}\quad  r = \lambda_{\max}\left(\frac12 \Sigma_2 + \tau \beta B^{\Tsf} B\right),
\]
where $\tau \in (0,1]$.  We decompose $T = T_+ - T_{-}$  with
$T_+ = r I - (\frac12 \Sigma_2 + \tau \beta B^{\Tsf} B)$ and $T_- = \frac12 \Sigma_2 + (1 - \tau) \beta B^{\Tsf} B$. Note that $T_+, T_{-} \succeq 0$.  By some direct calculations,  we have
\be \label{equ:T:choice:1}
  T + \Sigma_2 + (1 - \alpha) \beta B^{\Tsf} B = r I  - \alpha \beta B^{\Tsf}B
\ee
and
\be \label{equ:T:choice:2}
T + \frac12 \Sigma_2 +   \kappa_1^{\alpha, \gamma} (-2 T_- + \Sigma_2) + \kappa_2^{\alpha, \gamma} \beta B^{\Tsf} B = rI - \left(\frac12 \Sigma_2 + (1 + 2 \kappa_1^{\alpha, \gamma}(1 - \tau) - \kappa_2^{\alpha,\gamma})\beta B^{\Tsf}B\right).
\ee
For given $(\alpha, \gamma) \in \dom$ and a fixed $c^{\alpha, \gamma}$, by \eqref{equ:T:choice:1}  and \eqref{equ:T:choice:2}, we know that if we choose
$\tau > \alpha$ and $\tau > 1 - \frac{\kappa_2^{\alpha,\gamma}}{1 + 2 \kappa_1^{\alpha, \gamma}}$,  then \eqref{equ:condition:T}  must hold.  Note that  the number $1 - \frac{\kappa_2^{\alpha,\gamma}}{1 + 2 \kappa_1^{\alpha, \gamma}}$ is decreasing with respect to $c^{\alpha, \gamma}$ which is defined over an open interval. Hence, we can argue that if
$$
1 \geq \tau >   \max \left\{\alpha, \inf_{c^{\alpha, \gamma}} \left\{1 - \frac{\kappa_2^{\alpha,\gamma}}{1 + 2 \kappa_1^{\alpha, \gamma}} \right\} \right\},
$$
namely,
\be  \label{equ:tau}
1 \geq \tau > \underline{\tau}^{\alpha,\gamma} :=
\begin{cases}\displaystyle
1 - (1-\alpha)^2\frac{1-\alpha^2 - (\gamma-1)(\alpha+\gamma)}{(2-\alpha-\gamma)(1+\alpha)(5-3\alpha)} & (\alpha, \gamma) \in \dom_1, \\
\displaystyle \frac{3 + \alpha}{4} & (\alpha, \gamma) \in \dom_2, \\[2mm]
\displaystyle \frac{1 - \alpha \gamma}{2 - \alpha - \gamma} & (\alpha, \gamma) \in \dom_3, \\
\displaystyle \frac{1 + \alpha}{2} & (\alpha, \gamma) \in \dom_4,
\end{cases}
\ee
  then \eqref{equ:condition:T}  must hold.}

\rev{Consider the case  when $\theta_2(y) = \frac12 y^{\Tsf} M y + h(y)$,  where $M$ is symmetric positive semidefinite  and the nonsmooth convex function $h(y)$ is simple in the sense that $\min_{y \in \Y} h(y) + \frac{1}{2}\|y - d\|^2$ is easy to compute. Here $d \in \R^{n_2}$ is a given vector. In this case, we have $\Sigma_2 = M$ and the subproblem \eqref{equ:sP-PRSM2} with $T$ chosen according to \eqref{equ:T:new} and \eqref{equ:tau} takes the following form
\be
y^{k+1} = \arg\min_{y\in \Y}\ h(y) + \frac12 \|y - d^k\|^2\nn
\ee
with $d^k = Ty^k + B^{\Tsf} \left(\lambda^{k + \frac12} - \beta (Ax^{k+1} - b)\right)$.
}

\rev{To end this \revise{subsection}, some \revise{comments} are listed in order. \revise{Firstly}, if $\alpha = 0, \gamma = 1$ and $\Sigma_2 = 0$, \eqref{equ:tau} becomes $0.75 < \tau \leq 1$, which recovers the optimal bound of $\tau$ for the linearized ADMM in \cite{he2019optimally}; if $\alpha \in (0,1), \gamma = 1$ and $\Sigma_2 = 0$, \eqref{equ:tau} becomes $(3 + \alpha)/4 < \tau \leq 1$, which partially recovers the optimal bound of $\tau$ for the linearized version of the generalized ADMM in \cite{jiang2018generalized}. Note that in \cite{jiang2018generalized}, they \revise{allowed} $\alpha \in (-1,1)$. \revise{Secondly}, if $(\alpha, \gamma) \in \dom_2 \cup \dom_3$, it is easy to see that $\frac{1 - \alpha \gamma}{2 - \alpha - \gamma} \geq \frac{2 + \alpha + \gamma}{4}$ and the equality holds if and only if $\alpha = \gamma$, namely, $(\alpha, \gamma) \in \dom_4$. \revise{Thirdly}, when the subproblem \eqref{equ:sP-PRSM3} does not take a closed form solution, as done in \cite{li2016majorized,fazel2013hankel, zhang2017linearly}, we can consider the majorized version of iPSPR. The techniques for constructing the indefinite proximal $T$ in \cite{li2016majorized, chen2021equivalence} can be explored to construct $T$. We leave this for future investigation.
}

\section{Sublinear Convergence of iPSPR}\label{section:sublinear}
The rate of convergence of an algorithm can help us have a deeper understanding of the algorithm. \rev{In this section,  motivated by \cite{li2016majorized, deng2017parallel,han2017linear,zhang2017linearly,chen2021equivalence}, we establish the $o(1/t)$ sublinear rate of convergence of iPSPR in the  nonergodic sense}.

We first give a new optimality condition of \eqref{prob:cp} as follows.
  \begin{lemma} \label{lemma:opt}
    Let the sequence $\{w^k\}$ be generated by iPSPR \eqref{equ:sP-PRSM}.   We choose $(\alpha, \gamma) \in \dom$ and the proximal terms  $S$, $T$ are chosen such that  \eqref{equ:condition:S} and \eqref{equ:condition:T} hold. \rev{Then $w^{k+1} \in \Omega^*$}, namely, $w^{k+1}$ is one optimal solution of \eqref{prob:cp},  if $$\|w^k - w^{k+1}\|_{\Ghat} = 0.$$
  \end{lemma}
  \begin{proof}
\rev{The proof is similar to the second part of the proof of Theorem \ref{theorem:convergence}, we omit the details here.}
  \end{proof}

  \rev{Following from \eqref{equ:H:hat}, \eqref{equ:GhatH_norm}  and \eqref{equ:lemma:vkvk1:b1}, we have
  $$
  \|w^k - w^{k+1}\|_{\Ghat}^2 = \|x^k - x^{k+1}\|_{\Pone + \Sigma_1}^2 + \|y^k - y^{k+1}\|_{T + \Sigma_2 + (1 - \alpha)\beta B^TB}^2 + (\alpha + \gamma) \beta \|r^{k+1}\|^2.
  $$
  \rev{Hence, Lemma \ref{lemma:opt} provides a practical stopping condition for iPSPR \eqref{equ:sP-PRSM}, which is shown as}
  \[
   \rev{\max\{\|x^k - x^{k+1}\|_{\Pone + \Sigma_1},\|y^{k} - y^{k+1} \|_{T + \Sigma_2 + (1 - \alpha)\beta B^TB}^2,  \beta \|r^{k+1}\|^2\} \leq \mathrm{tol}},
  \]
  where $\mathrm{tol}$ is some tolerance.
  }

\begin{theorem}\label{theorem:nonergodic:1}
\rev{Let the sequence $\{w^k\}$ be generated by iPSPR \eqref{equ:sP-PRSM} \rev{with $(\alpha, \gamma) \in \dom$}.  Suppose that
the proximal terms  $S$, $T$ are chosen such that  \eqref{equ:condition:S}, \eqref{equ:condition:T}
and
\be \label{equ:S:new}
S + \frac12\Sigma_1 \succeq \frac12 c \Sigma_1
\ee
hold, where $c$ is a positive constant.
We have  that}
   \[
    \rev{\min_{1 \leq i \leq t} \|w^{i} - w^{i+1} \|_{\Ghat}^2 = o(1/t).} \label{theorem:nonergodic:00:min}
    \]
\end{theorem}
\begin{proof}
\rev{With conditions \eqref{equ:S:new} on $S$, we know that $S + \Sigma_1 \preceq  (1 + c^{-1}) (S + \frac12 \Sigma_1)$. With the condition \eqref{equ:condition:T} on $T$, we know that there exists a positive constant $c_1$ such that
$$\hat H \preceq c_1 \begin{pmatrix}
T + \frac12 \Sigma_2 + \kappa_1^{\alpha, \gamma} (-2 T_- + \Sigma_2)+ \kappa_2^{\alpha, \gamma} \beta B^{\Tsf} B & 0 \\
 0 & \frac{\bar \kappa_3 ^{\alpha,\gamma}}{\beta} I
\end{pmatrix},
$$
which with \eqref{equ:Ghat} implies that
\[\label{equ:condition:Ghat:nonergodic}
\Ghat \preceq \max\{1 + c^{-1}, c_1\}
\begin{pmatrix}
S + \frac12 \Sigma_1& 0 & 0 \\
0 & T + \frac12 \Sigma_2 + \kappa_1^{\alpha, \gamma} (-2 T_- + \Sigma_2)+ \kappa_2^{\alpha, \gamma} \beta B^{\Tsf} B & 0 \\
0 & 0 & \frac{\bar \kappa_3 ^{\alpha,\gamma}}{\beta} I
\end{pmatrix}. \nn
\]
This with \eqref{equ:wk:wk1:w*} implies that}
\begin{align}\label{equ:wk:wk1:w*:22}
 \Phi_{\alpha, \gamma}^k (w^*) - \Phi_{\alpha, \gamma}^{k+1} (w^*)
  \geq{}& \frac{1}{\max\{c, c_1\}} \|w^k - w^{k+1}\|_{\Ghat}^2.
  \end{align}
  Summing \eqref{equ:wk:wk1:w*:22} over $k = 1, \ldots, +\infty$ leads to
  \[\label{theorem:nonergodic:b0}
    \frac{1}{\rev{c}} \cdot   \sum_{k=1}^{+\infty} \|w^{k+1} - w^{k} \|_{\Ghat}^2 \leq \Phi_{\alpha, \gamma}^1 (w^*).
    \]
\rev{Using Lemma 3 in \cite{li2016majorized}, we have \eqref{theorem:nonergodic:00:min}.}
\end{proof}

\rev{Now we show that if $(\alpha, \gamma) \in \dom_2 \cup \dom_3 \cup \dom_4$ and some additional requirement is made on $T$, we can have a stronger result than \eqref{theorem:nonergodic:00:min}}. We first show that the sequence $\{\|w^k- w^{k+1}\|_{\Ghat}^2\}$ is non-increasing.
\begin{lemma}\label{lemma:decrease}
Let the sequence $\{w^k\}$ be generated by iPSPR \eqref{equ:sP-PRSM}. If \rev{$(\alpha, \gamma) \in \dom_2 \cup \dom_3 \cup \dom_4$} and
the proximal terms  $S$, $T$ are chosen such that  \eqref{equ:condition:S}, \eqref{equ:condition:T} and
\be\label{equ:T:nonergodic}
T+ \frac12 \Sigma_2 + \frac{(1 - \alpha)(1 - \gamma)}{2 - \alpha - \gamma} \beta B^{\Tsf}B \succeq 0
\ee
hold,
 then there holds that
\[ \label{equ:lemma:decrease:00}
\|w^k - w^{k+1} \|_{\Ghat}^2 \geq \|w^{k+1} - w^{k+2} \|_{\Ghat}^2.
    \]
\end{lemma}
\begin{proof}
Note that \eqref{equ:lemma:opt:01} also holds with $k := k +1$, then we have
\rev{
\begin{align}\label{equ:lemma:opt:nonergodic:2}
& (w - w^{k+2})^{\Tsf}  G (w^{k+2}-w^{k+1})\nn\\[2mm]
\ge{} & \llang r^{k+2} -r(w), (1 - \alpha - \gamma) \beta   r^{k+2} + (1 - \alpha) \beta   B (y^{k+1} -y^{k+2}) \rrang  \nn \\[2mm]
&+  \llang w^{k+2} - w, \rev{F(w^{k+2}, \xi_x^{k+2},  \xi_y^{k+2})} \rrang,
\end{align}
where $\xi_x^{k+2} \in \partial \theta_1(x^{k+2})$ and $\xi_y^{k+2} \in \partial \theta_2(y^{k+2})$.
Choosing $w$ to be $w^{k+2}$ and $w^{k+1}$, respectively, in \eqref{equ:lemma:opt:01} and \eqref{equ:lemma:opt:nonergodic:2} leads to
\begin{align}
 & (w^{k+2} - w^{k+1})^{\Tsf}  G (w^{k+1}-w^{k})\nn\\[2mm]
\ge{}& \llang r^{k+1} -r^{k+2}, (1 - \alpha - \gamma) \beta   r^{k+1} + (1 - \alpha) \beta   B (y^{k} -y^{k+1}) \rrang \nn \\[2mm]
 & + \llang w^{k+1} - w^{k+2}, \rev{F(w^{k+1}, \xi_x^{k+1},  \xi_y^{k+1})} \rrang. \label{equ:lemma:opt:nonergodic:b0}
\end{align}
and
\begin{align}
& (w^{k+1} - w^{k+2})^{\Tsf}  G (w^{k+2}-w^{k+1})\nn\\[2mm]
\ge{} &\llang r^{k+2} -r^{k+1}, (1 - \alpha - \gamma) \beta   r^{k+2} + (1 - \alpha) \beta   B (y^{k+1} -y^{k+2}) \rrang\nn \\[2mm]
&+ \llang w^{k+2} - w^{k+1}, F(w^{k+2}, \xi_x^{k+2},  \xi_y^{k+2}) \rrang. \label{equ:lemma:opt:nonergodic:b1}
\end{align}

Adding \eqref{equ:lemma:opt:nonergodic:b0} and \eqref{equ:lemma:opt:nonergodic:b1} and noting
$\langle w^{k+2} - w^{k+1}, F(w^{k+2}, \xi_x^{k+2},  \xi_y^{k+2}) - F(w^{k+1}, \xi_x^{k+1},$ $  \xi_y^{k+1}) \rangle \geq \|u^{k+2} - u^{k+1}\|_{\Sigma}^2$ which follows from \eqref{equ:F:mono},   we obtain that
\begin{align}
&(w^{k+2} - w^{k+1})^{\Tsf}  G \left[(w^{k+1}-w^{k}) - (w^{k+2} - w^{k+1}) \right] \nn \\[2mm]
\geq{}& (1 - \alpha - \gamma) \beta \| r^{k+1} - r^{k+2}\|^2  + (1 - \alpha)\beta \left \langle B\left[(y^k - y^{k+1}) - (y^{k+1} - y^{k+2})\right], r^{k+1} - r^{k+2}\right\rangle. \nn\\[2mm]
  & + \|u^{k+2} - u^{k+1}\|_{\Sigma}^2.
   \label{equ:lemma:opt:nonergodic:b3}
\end{align}}
Following the deriving process of \rev{\eqref{equ:GH_norm} and} \eqref{equ:lemma:vkvk1:b1}, we have
\begin{align}
&  \|(w^{k}-w^{k+1}) - (w^{k+1} - w^{k+2} )\|_G^2    \nn \\
={} &  \|(u^k - u^{k+1}) - (u^{k+1} - u^{k+2})\|_{P}^2 + (1-\alpha)\beta \left\|B[ (y^{k} - y^{k+1}) - (y^{k+1} - y^{k+2})]\right\|^2 \nn \\
{} & + (\alpha + \gamma) \beta \|r^{k+1} - r^{k+2}\|^2.\label{equ:lemma:opt:nonergodic:b4}
\end{align}
Thus we conclude that
\begin{align}
 & \rev{\|w^{k} - w^{k+1} \|_{\Ghat}^2 - \|w^{k+1} - w^{k+2} \|_{\Ghat}^2} \nn \\
  ={}& (\|w^{k} - w^{k+1} \|_{G}^2 + \|u^k - u^{k+1}\|_{\Sigma}^2) - (\|w^{k+1} - w^{k+2} \|_{G}^2  + \|u^{k+1} - u^{k+2}\|_{\Sigma}^2)\nn \\
  ={}& 2(w^{k+2} - w^{k+1})^{\Tsf}  G \left[(w^{k+1}-w^{k}) - (w^{k+2} - w^{k+1}) \right] + \|(w^{k+1}-w^{k}) - (w^{k+2} - w^{k+1} )\|_{G}^2 \nn \\
  & +
    \|u^k - u^{k+1}\|_{\Sigma}^2 - \|u^{k+1} - u^{k+2}\|_{\Sigma}^2 \nn \\
  \ge{}& (2 - \alpha - \gamma) \beta \| r^{k+1} - r^{k+2}\|^2  + 2(1 - \alpha)\beta \left \langle B\left[(y^k - y^{k+1}) - \rev{(y^{k+1} - y^{k+2}})\right], r^{k+1} - r^{k+2}\right\rangle \nn \\
  & + (1-\alpha)\beta \left\|\rev{B[ (y^{k} - y^{k+1}) - (y^{k+1} - y^{k+2})]}\right\|^2 + \|(u^k - u^{k+1}) - (u^{k+1} - u^{k+2})\|_{P}^2 \nn \\
  & +  \|u^k - u^{k+1}\|_{\Sigma}^2+  \|u^{k+1} - u^{k+2}\|_{\Sigma}^2\nn \\
  \geq{} &\frac{(1 - \alpha)(1 - \gamma)}{2 - \alpha - \gamma} \beta \left\| \rev{B[(y^{k} - y^{k+1}) - (y^{k+1} - y^{k+2})]}\right\|^2 + \|(u^k - u^{k+1}) - (u^{k+1} - u^{k+2})\|_{P}^2\nn  \\
  & +
   \frac12 \|(u^k - u^{k+1}) - (u^{k+1} - u^{k+2})\|_{\Sigma}^2 \nn \\
={}&  \|(x^k - x^{k+1}) - (x^{k+1} - x^{k+2})\|_{S + \frac12 \Sigma_1}^2 +  \|(y^k - y^{k+1}) - (y^{k+1} {-} y^{k+2})\|_{T+ \frac12 \Sigma_2 + \frac{(1 {-} \alpha)(1 {-} \gamma)}{2 - \alpha - \gamma} \beta B^{\Tsf}B}^2\nn \\
\geq{}& 0.
 \end{align}
where the first inequality is due to \eqref{equ:lemma:opt:nonergodic:b3} and \eqref{equ:lemma:opt:nonergodic:b4}, the second inequality follows from the Cauchy-Schwarz inequality and the last inequality is due to $P = \left(\begin{matrix}  S& 0 \\ 0 & T\end{matrix} \right)$, $S + \frac12 \Sigma_1 \succeq 0$ and \eqref{equ:T:nonergodic}.
The proof is completed.
\end{proof}

\begin{theorem}\label{theorem:nonergodic:2}
Let the sequence $\{w^k\}$ be generated by iPSPR \eqref{equ:sP-PRSM} with $(\alpha, \gamma) \in \dom_2 \cup \dom_3 \cup \dom_4$.  Suppose that
the proximal term  $S$ is chosen  according to \eqref{equ:condition:S} and \eqref{equ:S:new} and the proximal term $T$ is chosen according to \eqref{equ:condition:T} and \eqref{equ:T:nonergodic}.
We have
  \[
    \|w^{t} - w^{t+1} \|_{\widehat G}^2 = o(1/t). \label{theorem:nonergodic:00}
    \]
\end{theorem}
\begin{proof}
It follows from Theorem \ref{theorem:nonergodic:1} that $\min_{1 \leq i \leq t}\|w^{i} - w^{i+1} \|_{\widehat G}^2 = o(1/t)$, which with \eqref{equ:lemma:decrease:00} implies \eqref{theorem:nonergodic:00}.
The proof is completed.
\end{proof}

\revi{
Define the KKT mapping $R\colon \Omega \rightarrow \Omega$ as
\[
R(w) = \left(\begin{array}{c}
x - \mathrm{Pr}_{p}[x -(\nabla f(x) -A^{\Tsf} \lambda)] \\ y - \mathrm{Pr}_{h}[y - (\nabla g(y) -B^{\Tsf} \lambda)] \\ Ax + By - b \end{array} \right), \;\; \forall w \in \Omega,
\]
where $\mathrm{Pr}_{p}(\cdot)$ denotes the Moreau-Yosida proximal mapping \cite{RW2004} defined as
$$\mathrm{Pr}_{p}(z):= \arg\min_{x \in \X} \left\{p(x) + \frac12 \|x - z\|^2 \right\},
$$
and  $\mathrm{Pr}_{h}(\cdot)$ is defined accordingly.  It is well known that (see \cite{han2017linear,zhang2017linearly,chen2021equivalence} for instance)
\[\forall w \in \Omega, \quad R(w) = 0 \Longleftrightarrow w \in \Omega^*.
\]
Inspired by \cite[Lemma 1]{han2017linear} and \cite[Lemma 3.1]{zhang2017linearly}, we now characterize the relations between $\|R(w^{k+1})\|^2$ and $\|w^{k+1} - w^k\|_{\widehat G}^2$.
\begin{lemma}\label{lemma:kkt}
Let the sequence $\{w^k\}$ be generated by iPSPR \eqref{equ:sP-PRSM}. Then there exists  a positive constant $\varrho$ such that
for any $k\geq 0$,
\be\label{lemma:kkt:main}
\|R(w^{k+1})\|^2  \leq \varrho \|w^{k+1} - w^k\|_{\widehat G}^2.
\ee
\end{lemma}
\begin{proof}
From the optimality condition for \eqref{equ:sP-PRSM} and $\lambda^{k + \frac12} = \lambda^{k+1} + \gamma \beta r^{k+1}$, we have
\[\label{defx}
x^{k+1} =  \mathrm{Pr}_{p}\left[x^{k+1} - \left(\nabla f(x^{k+1})-A^{\Tsf}\la^k + \beta A^{\Tsf} r^{k+1} + \beta  A^{\Tsf}  B (y^k - y^{k+1})+ S(x^{k+1} - x^k) \right)\right]
\]
and
\[\label{defy}
y^{k+1} =  \mathrm{Pr}_{h}\left[y^{k+1} - \left(\nabla g(y^{k+1})-B^{\Tsf}\la^{k+1} + (1 - \gamma) \beta  B^{\Tsf} r^{k+1} + T(y^{k+1} - y^k) \right)\right].
\]
Substituting  the above equations \eqref{defx} and \eqref{defy} into $R(w^{k+1})$ and
noting that  the Moreau-Yosida proximal mapping is globally  Lipschitz continuous with modulus one, we get
\begin{align}
\|R(w^{k+1})\| \leq{}&  \left\|
A^{\Tsf} (\lambda^{k+1} - \lambda^{k})  + \beta A^{\Tsf} r^{k+1} + \beta  A^{\Tsf}  B (y^k - y^{k+1}) + S(x^{k+1} - x^k) \right\| \nn \\
&    + \left\| (1 - \gamma) \beta  B^{\Tsf} r^{k+1}  + T(y^{k+1} - y^k) \right\| + \| r^{k+1}\| \nn \\
\leq{} & \|A\|  \|\lambda^{k+1} - \lambda^{k}\| + \left( (\|A\| + |1 - \gamma| \|B\|)\beta + 1\right) \|r^{k+1}\| + \beta \|A^{\Tsf}B\| \|y^k - y^{k+1}\| \nn \\
& + \|S(x^{k+1} - x^k)\| + \|T(y^k - y^{k+1})\|.  \nn
\end{align}
Notice that $\|S(x^{k+1} - x^k)\|^2 \leq \|S\| \|x^{k+1} - x^k\|_{S + \Sigma_1}^2$, by using \eqref{equ:opt:la},  we thus have
\be\label{lemma:kkt:1}
\|R(w^{k+1})\| \leq  \iota_1 \|x^{k+1} - x^k\|_{S + \Sigma_1}  + \iota_2 \|y^{k+1} - y^k\| + \iota_3 \|\lambda^{k+1} - \lambda^k\|,
\ee
where $\iota_1 = \sqrt{\|S\|}$, $\iota_2 =  \left((\|A\| + |1 - \gamma| \|B\|)\beta + 1\right)  \frac{\alpha \|B\|}{\alpha + \gamma}  + \beta \|A^{\Tsf}B\| + \|T\|$
and $\iota_3 = \|A\| + \frac{(\|A\| + |1 - \gamma| \|B\|)\beta + 1}{(\alpha + \gamma)\beta} $.
On the other hand, we know from \eqref{equ:GhatH_norm} that
\be\label{lemma:kkt:2}
\|w^{k+1} - w^k\|_{\widehat G}=  \|x^{k+1} - x^k\|_{S + \Sigma_1}^2 +\|v^{k+1} - v^k\|_{\widehat H}^2.
\ee
Since $\widehat H\succ 0$ (see Proposition \eqref{proposition:H:G}),  combining \eqref{lemma:kkt:1} and \eqref{lemma:kkt:2} together,  we know that there must exist a positive constant such that  \eqref{lemma:kkt:main} holds. The proof is completed.
\end{proof}
With Theorem \ref{theorem:nonergodic:1}, Theorem \ref{theorem:nonergodic:2}  and Lemma \ref{lemma:kkt}, we can immediately arrive at the sublinear convergence rate results  based on the KKT residual $\|R(w^k)\|$.
\begin{theorem}\label{theorem:nonergodic:2n}
Let the sequence $\{w^k\}$ be generated by iPSPR \eqref{equ:sP-PRSM} {with $(\alpha, \gamma) \in \dom$}.  Suppose that
the proximal term  $S$ is chosen  according to \eqref{equ:condition:S} and \eqref{equ:S:new} and the proximal term $T$ is chosen according to \eqref{equ:condition:T} hold, we have
   $$
   \min_{1 \leq i \leq t} \|R(w^{i+1})\|^2 = o(1/t). 
    $$
If we restrict $(\alpha, \gamma) \in \dom_2 \cup \dom_3 \cup \dom_4$ and suppose that
the proximal term  $S$ is chosen  according to \eqref{equ:condition:S} and \eqref{equ:S:new} and the proximal term $T$ is chosen according to \eqref{equ:condition:T} and \eqref{equ:T:nonergodic}, we have
  $$
    \|R(w^{t+1}) \|^2 = o(1/t). 
    $$
\end{theorem}
}

 \section{Numerical Results}\label{section:numerical}
In this section, we demonstrate the  potential efficiency of our method iPSPR \eqref{equ:sP-PRSM} by \rev{solving the} following $\ell_1$ regularized least square problem
\[\label{equ:prob:qp:0}
\min\  \frac12 \|Q y - c\|^2 + \rho \|y\|_1, \quad \st \quad  By \leq b,
\]
where $y \in \R^n, c \in \R^p$, $Q \in \R^{p \times n}$ and $B \in \R^{m \times n}$.
Problem \eqref{equ:prob:qp:0} is a constrained extension of the ordinary unconstrained $\ell_1$ regularized least square problem and it was considered in \cite{li2016majorized}.  By introducing an auxiliary variable $x \in \R^m$, we rewrite \eqref{equ:prob:qp:0} as
\[\label{equ:prob:qp}
\min\  \frac12 \|Q y - c\|^2 + \rho \|y\|_1, \quad \st \quad  x + By = b,  \ x \geq 0,
\]
which is a special instance  of \eqref{prob:cp}.

\rev{For our method iPSPR \eqref{equ:sP-PRSM}, we set $S = 0$ and choose $T$ according to \eqref{equ:T:new}, namely,
\be\label{equ:T:num}
T = rI - (Q^{\Tsf}Q + \beta B^{\Tsf} B) \quad \mbox{with}\quad r = \lambda_{\max}\left(\frac12 Q^{\Tsf} Q + \tau \beta B^{\Tsf}B
\right)
\ee
with $\tau = 1.001\underline{\tau}^{\alpha, \gamma}$, where  $\underline{\tau}^{\alpha, \gamma}$ is defined in \eqref{equ:tau}.} Our method iPSPR \eqref{equ:sP-PRSM} for solving \eqref{equ:prob:qp} is then given as
\[ \label{equ:prob:qp:iPSPR}
\left\{
\begin{aligned}
x^{k+1}&= \mathcal{P}_{+}\left[b - B y^k  + \lambda^k/\beta\right],\\
\la^{k+\frac{1}{2}}&=\la^k-\alpha\beta(x^{k+1} + B y^{k} - b),\\
y^{k+1}&= \rev{\mathcal{S}_{\rho/r}\left[y^k + \frac{1}{r} \left(B^{\Tsf}\left(\lambda^{k + \frac12} - \beta (x^{k+1} + B y^k - b) \right) + Q^{\Tsf}(c -  Q y^k)\right)\right]},\\
\la^{k+1}&=\la^{k+\frac{1}{2}}-\gamma\beta(x^{k+1} + B y^{k+1} - b),
\end{aligned}
\right.\]
where \rev{the projection operator $\mathcal{P}_{+}(z) = \max(z, 0)$ and} the shrinkage operator  $\mathcal{S}_{\nu} (z) := \mathrm{sgn}(z) \odot \max\{|z| - \nu, 0\}$. \rev{Note that for problem \eqref{equ:prob:qp}, the majorized indefinite proximal ADMM in \cite{li2016majorized} coincides with our iPSPR \eqref{equ:prob:qp:iPSPR}  with $\alpha = 0$ and $(\alpha, \gamma) \in \dom_1$ since the smooth part of the objective function is quadratic with respect to $y$.}  \rev{If the proximal parameter $r = 1.001\lambda_{\max}(Q^{\Tsf} Q + \beta B^{\Tsf} B)$, iPSPR becomes the semidefinite  proximal-based strictly contractive Peaceman-Rachford splitting method (sPSPR).  \rev{To make a fair comparison, as done in (85) of \cite{li2016majorized}, we stop iPSPR or sPSPR when the KKT residual is less than $10^{-6}$.}
}

All the experiments were preformed in Ubuntu 16.04 LTS  a Dell workstation with a 3.5-GHz Intel Xeon E3-1240 v5 processor with access to 32 GB of RAM.  All the  methods
were implemented in MATLAB (R2016b). Given $m$ and $n$, as done in \cite{li2016majorized}, we set $p = 0.1n$, $\rho = 5 \sqrt{n}$ and generate the data as
\begin{verbatim}
  B = sprandn(m, n, 0.2);  yy = randn(n, 1);  b = B * yy + max(randn(m,1), 0);
  Q = sprandn(p, n, 0.1);  c = Q * yy;
\end{verbatim}

In our tests, we \rev{set $m = 2000$ and $n = 1000, 2000, 4000, 8000$. For each $m$ and $n$, we} use the above scheme to generate 50 groups of instances and will always report the average performance for methods  iPSPR and sPSPR. \rev{\revise{For each instance, we fix the sum $\alpha + \gamma$ to be $\{1.9, 1.8, 1.618, 1\}$ and always choose the special cases with $\alpha = \gamma$,} \rev{$\alpha = 0$ or $\gamma = 1$.} \rev{In total, we have} \revise{nine groups of choices of $\alpha$ and $\gamma$.}
 In our tests,  the  penalty parameter $\beta$ is fixed during the iterations. Generally, choosing the best penalty parameter $\beta$ is not easy and it might be problem dependent \cite{he2014strictly}. We spent some efforts to choose the penalty parameter $\beta$ from a large number of candidates. For each given $m, n$ and $\alpha, \gamma$, we report the performance of iPSPR or sPSPR with four choices of $\beta$. Note that in our tests, the second choice is the best choice in the candidates for $\alpha + \gamma = 1.9$ and \revise{almost} the best choice in the candidates for $\alpha + \gamma \in \{1.618, 1.8\}$; the third choice of $\beta$ is the best choice in the candidates for $\alpha + \gamma = 1$.}
\vskip 3mm

\renewcommand{\arraystretch}{1.1}
\begin{table}[!htb]
	\setlength{\tabcolsep}{2pt}
\centering\small
 \caption{\rev{The results for $m = 2000, n = 1000$ over 50 runs. The CPU time  is in seconds.}}
\rev{
\begin{tabular}{|cc|rrr|rrr|rrr|rrr|}
 %
 \hline
  &&   \multicolumn{3}{c|}{$\beta =  0.50$}    & \multicolumn{3}{c|}{$\beta = 1.50$} & \multicolumn{3}{c|}{$\beta = 3.00$} &    \multicolumn{3}{c|}{\revise{$\beta = 5.00$}} \\
 \cline{3-14}
 $(\alpha, \gamma)$ & method & iter & $r$ & t  & iter & $r$ & t  & iter & $r$ & t  & iter & $r$ & t  \\   \hline
(0.950, 0.950) & iPSPR & 8769.0 & 5.23e2 &  4.9 & \textbf{4750.1} & 1.56e3 &  2.7 & 5876.0 & 3.11e3 &  3.3 & 8576.2 & 5.18e3 &  4.8 \\
(0.950, 0.950) & sPSPR & 8877.3 & 5.47e2 &  5.0 & \textbf{4815.4} & 1.60e3 &  2.7 & 6000.3 & 3.19e3 &  3.4 & 8810.7 & 5.31e3 &  5.0 \\  \cdashline{1-14}[0.8pt/2pt]
(0.900, 1.000) & iPSPR & 8769.8 & 5.23e2 &  5.0 & \textbf{4752.9} & 1.56e3 &  2.7 & 5878.7 & 3.11e3 &  3.4 & 8579.8 & 5.18e3 &  4.9 \\
(0.900, 1.000) & sPSPR & 8878.4 & 5.47e2 &  5.0 & \textbf{4817.9} & 1.60e3 &  2.8 & 6001.5 & 3.19e3 &  3.4 & 8811.9 & 5.31e3 &  5.0 \\ \cdashline{1-14}[0.8pt/2pt]
(0.900, 0.900) & iPSPR & 9090.5 & 5.10e2 &  5.1 & \textbf{4884.2} & 1.52e3 &  2.8 & 5815.1 & 3.03e3 &  3.3 & 8375.5 & 5.04e3 &  4.7 \\
(0.900, 0.900) & sPSPR & 9252.9 & 5.47e2 &  5.2 & \textbf{4982.8} & 1.60e3 &  2.8 & 6075.2 & 3.19e3 &  3.4 & 8880.5 & 5.31e3 &  5.0 \\ \cdashline{1-14}[0.8pt/2pt]
(0.800, 1.000) & iPSPR & 9094.7 & 5.10e2 &  5.2 & \textbf{4887.6} & 1.52e3 &  2.8 & 5823.3 & 3.03e3 &  3.3 & 8405.3 & 5.04e3 &  4.8 \\
(0.800, 1.000) & sPSPR & 9256.5 & 5.47e2 &  5.2 & \textbf{4985.6} & 1.60e3 &  2.8 & 6079.5 & 3.19e3 &  3.5 & 8894.2 & 5.31e3 &  5.0 \\  \cdashline{1-14}[0.8pt/2pt]
(0.809, 0.809) & iPSPR & 9706.1 & 4.86e2 &  5.5 & \textbf{5250.0} & 1.44e3 &  3.0 & 5667.2 & 2.88e3 &  3.2 & 8094.9 & 4.80e3 &  4.6 \\
(0.809, 0.809) & sPSPR & 10058.7 & 5.47e2 &  5.7 & \textbf{5388.7} & 1.60e3 &  3.1 & 6240.7 & 3.19e3 &  3.6 & 8960.9 & 5.31e3 &  5.1 \\  \cdashline{1-14}[0.8pt/2pt]
(0.618, 1.000) & iPSPR & 9722.5 & 4.86e2 &  5.5 & \textbf{5245.5} & 1.44e3 &  3.0 & 5694.4 & 2.88e3 &  3.2 & 8124.6 & 4.80e3 &  4.6 \\
(0.618, 1.000) & sPSPR & 10074.1 & 5.47e2 &  5.7 & \textbf{5392.1} & 1.60e3 &  3.1 & 6261.2 & 3.19e3 &  3.6 & 9007.0 & 5.31e3 &  5.1 \\ \cdashline{1-14}[0.8pt/2pt]
(0.000, 1.618) & iPSPR & 10216.2 & 5.37e2 &  5.8 & \textbf{5511.2} & 1.60e3 &  3.1 & 6602.8 & 3.19e3 &  3.8 & 9583.0 & 5.31e3 &  5.4 \\
(0.000, 1.618) & sPSPR & 10264.2 & 5.47e2 &  5.8 & \textbf{5517.4} & 1.60e3 &  3.1 & 6605.5 & 3.19e3 &  3.8 & 9583.1 & 5.31e3 &  5.4 \\  \cdashline{1-14}[0.8pt/2pt]
(0.000, 1.000) & iPSPR & 13521.7 & 4.05e2 &  7.6 & 7761.5 & 1.20e3 &  4.4 & \textbf{5967.7} & 2.39e3 &  3.4 & 7667.3 & 3.98e3 &  4.3 \\
(0.000, 1.000) & sPSPR & 14513.1 & 5.47e2 &  8.2 & 8041.4 & 1.60e3 &  4.6 & \textbf{7311.9} & 3.19e3 &  4.2 & 9930.7 & 5.31e3 &  5.6 \\  \cdashline{1-14}[0.8pt/2pt]
(0.500, 0.500) & iPSPR & 13308.9 & 4.05e2 &  7.5 & 7724.5 & 1.20e3 &  4.4 & \textbf{5728.7} & 2.39e3 &  3.3 & 7265.2 & 3.98e3 &  4.1 \\
(0.500, 0.500) & sPSPR & 14336.6 & 5.47e2 &  8.1 & 7964.6 & 1.60e3 &  4.5 & \textbf{7097.4} & 3.19e3 &  4.0 & 9540.8 & 5.31e3 &  5.4 \\
\hline
\end{tabular}\label{result-1000}
}
\end{table}

\vskip 3mm

\rev{The numerical results are presented in Tables
\ref{result-1000}-\ref{result-8000}. In the tables, ``iter'' means the averaged iteration numbers, ``r'' denotes the proximal parameter in \eqref{equ:prob:qp:iPSPR}, and  ``t'' means the CPU time in seconds. From the tables, we can make the following observations.}
\revise{First, iPSPR always perform better than the sPSPR. \rev{In particular, for $n = 4000, \beta = 0.15$ and $n = 8000, \beta = 0.07$}, iPSPR can bring about 40\% - 50\% reduction in the number of iterations and the CPU time over the sPSPR. \rev{For $n = 1000$ and $2000$, iPSPR  with large sum $\alpha + \gamma$ performs only slightly better than sPSPR with the same $\alpha$ and $\gamma$. This might be due to that $\beta B^{\Tsf}B$ takes a major part in computing $r$ and the parameter $\tau$ of iPSPR is near to $1$ in this case.} \rev{Second,   iPSPR (resp. sPSPR) with $\alpha = \gamma$ performs slightly better among the choices of $\alpha$ and $\gamma$ with fixed sum.} \rev{Third,} a large $\alpha + \gamma$ sum (near to 2) always performs better \rev{than a small sum} \rev{for a relatively small} $\beta$, \rev{while} a small sum works \rev{better than a large sum for a relatively large} $\beta$. \rev{However, if we choose the best $\beta$ (the corresponding results are marked in bold in each table) for each $\alpha$ and $\gamma$, we can see that iPSPR (resp. sPSPR) a large $\alpha + \gamma$ sum always performs better than iPSPR (resp. sPSPR) with a small sum.}
}

\newpage
\begin{table}[!htp]
	\setlength{\tabcolsep}{2pt}
\centering\small
\caption{\rev{The results for $m = 2000, n = 2000$ over 50 runs. The CPU time is in seconds.}}
\rev{
\begin{tabular}{|cc|rrr|rrr|rrr|rrr|}
 %
 \hline
  &&   \multicolumn{3}{c|}{$\beta =  0.10$}    & \multicolumn{3}{c|}{$\beta = 0.30$} & \multicolumn{3}{c|}{$\beta = 0.50$} &    \multicolumn{3}{c|}{\revise{$\beta = 1.00$}} \\
 \cline{3-14}
 $(\alpha, \gamma)$ & method & iter & $r$ & t  & iter & $r$ & t  & iter & $r$ & t  & iter & $r$ & t  \\   \hline
(0.950, 0.950) & iPSPR & 2192.4 & 2.18e2 &  3.2 & \textbf{1012.0} & 4.43e2 &  1.5 & 1240.6 & 7.22e2 &  1.8 & 2328.8 & 1.43e3 &  3.4 \\
(0.950, 0.950) & sPSPR & 2357.0 & 3.83e2 &  3.4 & \textbf{1121.2} & 5.22e2 &  1.7 & 1309.0 & 7.66e2 &  1.9 & 2417.4 & 1.48e3 &  3.5 \\ \cdashline{1-14}[0.8pt/2pt]
(0.900, 1.000) & iPSPR & 2192.8 & 2.18e2 &  3.2 & \textbf{1012.2} & 4.43e2 &  1.5 & 1240.9 & 7.22e2 &  1.9 & 2329.3 & 1.43e3 &  3.4 \\
(0.900, 1.000) & sPSPR & 2357.3 & 3.83e2 &  3.4 & \textbf{1121.4} & 5.22e2 &  1.7 & 1309.2 & 7.66e2 &  2.0 & 2417.9 & 1.48e3 &  3.5 \\ \cdashline{1-14}[0.8pt/2pt]
(0.900, 0.900) & iPSPR & 2294.6 & 2.17e2 &  3.3 & \textbf{1028.0} & 4.32e2 &  1.5 & 1226.5 & 7.04e2 &  1.8 & 2274.1 & 1.39e3 &  3.3 \\
(0.900, 0.900) & sPSPR & 2474.9 & 3.83e2 &  3.6 & \textbf{1146.1} & 5.22e2 &  1.7 & 1324.5 & 7.66e2 &  2.0 & 2423.7 & 1.48e3 &  3.5 \\ \cdashline{1-14}[0.8pt/2pt]
(0.800, 1.000) & iPSPR & 2295.3 & 2.17e2 &  3.4 & \textbf{1028.4} & 4.32e2 &  1.6 & 1227.1 & 7.04e2 &  1.8 & 2276.1 & 1.39e3 &  3.3 \\
(0.800, 1.000) & sPSPR & 2475.5 & 3.83e2 &  3.6 & \textbf{1146.8} & 5.22e2 &  1.7 & 1325.2 & 7.66e2 &  2.0 & 2425.9 & 1.48e3 &  3.5 \\ \cdashline{1-14}[0.8pt/2pt]
(0.809, 0.809) & iPSPR & 2509.4 & 2.14e2 &  3.6 & \textbf{1074.5} & 4.13e2 &  1.6 & 1201.1 & 6.71e2 &  1.8 & 2170.0 & 1.33e3 &  3.2 \\
(0.809, 0.809) & sPSPR & 2722.4 & 3.83e2 &  3.9 & \textbf{1201.1} & 5.22e2 &  1.8 & 1351.8 & 7.66e2 &  2.0 & 2437.7 & 1.48e3 &  3.5 \\ \cdashline{1-14}[0.8pt/2pt]
(0.618, 1.000) & iPSPR & 2511.7 & 2.14e2 &  3.6 & \textbf{1075.4} & 4.13e2 &  1.6 & 1204.1 & 6.71e2 &  1.8 & 2177.7 & 1.33e3 &  3.2 \\
(0.618, 1.000) & sPSPR & 2724.1 & 3.83e2 &  3.9 & \textbf{1202.5} & 5.22e2 &  1.8 & 1354.7 & 7.66e2 &  2.0 & 2445.8 & 1.48e3 &  3.6 \\ \cdashline{1-14}[0.8pt/2pt]
(0.000, 1.618) & iPSPR & 2541.2 & 2.20e2 &  3.7 & \textbf{1138.7} & 4.53e2 &  1.7 & 1366.0 & 7.40e2 &  2.0 & 2547.4 & 1.47e3 &  3.7 \\
(0.000, 1.618) & sPSPR & 2733.0 & 3.83e2 &  3.9 & \textbf{1228.7} & 5.22e2 &  1.8 & 1406.8 & 7.66e2 &  2.1 & 2571.5 & 1.48e3 &  3.7 \\ \cdashline{1-14}[0.8pt/2pt]
(0.000, 1.000) & iPSPR & 3737.8 & 2.05e2 &  5.4 & 1551.7 & 3.50e2 &  2.3 & \textbf{1242.2} & 5.60e2 &  1.8 & 1910.1 & 1.10e3 &  2.8 \\
(0.000, 1.000) & sPSPR & 4152.4 & 3.83e2 &  5.9 & 1649.0 & 5.22e2 &  2.4 & \textbf{1542.5} & 7.66e2 &  2.3 & 2559.0 & 1.48e3 &  3.7 \\ \cdashline{1-14}[0.8pt/2pt]
(0.500, 0.500) & iPSPR & 3716.2 & 2.05e2 &  5.3 & 1546.0 & 3.50e2 &  2.3 & \textbf{1213.4} & 5.60e2 &  1.8 & 1831.6 & 1.10e3 &  2.7 \\
(0.500, 0.500) & sPSPR & 4137.3 & 3.83e2 &  5.9 & 1637.5 & 5.22e2 &  2.4 & \textbf{1509.1} & 7.66e2 &  2.2 & 2481.4 & 1.48e3 &  3.6 \\
\hline
\end{tabular}\label{result-2000}
}
\end{table}

\begin{table}[!htp]
	\setlength{\tabcolsep}{2pt}
\centering\small
\caption{\rev{The results for $m = 2000, n = 4000$ over 50 runs. The CPU time is in seconds.}}
\rev{
\begin{tabular}{|cc|rrr|rrr|rrr|rrr|}
 %
 \hline
  &&   \multicolumn{3}{c|}{$\beta =  0.08$}    & \multicolumn{3}{c|}{$\beta = 0.15$} & \multicolumn{3}{c|}{$\beta = 0.25$} &    \multicolumn{3}{c|}{$\beta =0.50$} \\
 \cline{3-14}
 $(\alpha, \gamma)$ & method & iter & $r$ & t  & iter & $r$ & t  & iter & $r$ & t  & iter & $r$ & t  \\   \hline
(0.950, 0.950) & iPSPR &  905.6 & 3.71e2 &  3.0 &  \textbf{672.0} & 4.24e2 &  2.3 &  831.3 & 5.65e2 &  2.8 & 1657.8 & 1.06e3 &  5.4 \\
(0.950, 0.950) & sPSPR & 1207.2 & 7.03e2 &  4.0 & \textbf{1103.8} & 7.39e2 &  3.7 & 1233.3 & 8.09e2 &  4.1 & 1813.6 & 1.15e3 &  5.9 \\  \cdashline{1-14}[0.8pt/2pt]
(0.900, 1.000) & iPSPR &  905.7 & 3.71e2 &  3.0 &  \textbf{672.0} & 4.24e2 &  2.3 &  831.4 & 5.65e2 &  2.8 & 1657.9 & 1.06e3 &  5.4 \\
(0.900, 1.000) & sPSPR & 1207.2 & 7.03e2 &  4.0 & \textbf{1103.9} & 7.39e2 &  3.7 & 1233.3 & 8.09e2 &  4.1 & 1813.7 & 1.15e3 &  5.9 \\ \cdashline{1-14}[0.8pt/2pt]
(0.900, 0.900) & iPSPR &  943.5 & 3.70e2 &  3.2 &  \textbf{681.6} & 4.21e2 &  2.3 &  812.2 & 5.54e2 &  2.8 & 1615.6 & 1.03e3 &  5.3 \\
(0.900, 0.900) & sPSPR & 1234.3 & 7.03e2 &  4.1 & \textbf{1103.9} & 7.39e2 &  3.7 & 1231.0 & 8.09e2 &  4.1 & 1815.3 & 1.15e3 &  6.0 \\ \cdashline{1-14}[0.8pt/2pt]
(0.800, 1.000) & iPSPR &  943.7 & 3.70e2 &  3.2 &  \textbf{681.7} & 4.21e2 &  2.3 &  812.3 & 5.54e2 &  2.8 & 1616.2 & 1.03e3 &  5.3 \\
(0.800, 1.000) & sPSPR & 1234.5 & 7.03e2 &  4.1 & \textbf{1104.4} & 7.39e2 &  3.7 & 1231.3 & 8.09e2 &  4.1 & 1816.0 & 1.15e3 &  6.0 \\  \cdashline{1-14}[0.8pt/2pt]
(0.809, 0.809) & iPSPR & 1029.0 & 3.68e2 &  3.4 &  \textbf{706.1} & 4.14e2 &  2.4 &  779.1 & 5.34e2 &  2.7 & 1537.7 & 9.86e2 &  5.1 \\
(0.809, 0.809) & sPSPR & 1294.0 & 7.03e2 &  4.3 & \textbf{1108.9} & 7.39e2 &  3.7 & 1226.0 & 8.09e2 &  4.1 & 1817.5 & 1.15e3 &  6.0 \\  \cdashline{1-14}[0.8pt/2pt]
(0.618, 1.000) & iPSPR & 1029.6 & 3.68e2 &  3.4 &  \textbf{706.8} & 4.14e2 &  2.4 &  780.9 & 5.34e2 &  2.7 & 1540.7 & 9.86e2 &  5.1 \\
(0.618, 1.000) & sPSPR & 1294.5 & 7.03e2 &  4.3 & \textbf{1109.9} & 7.39e2 &  3.7 & 1227.7 & 8.09e2 &  4.1 & 1820.4 & 1.15e3 &  6.0 \\  \cdashline{1-14}[0.8pt/2pt]
(0.000, 1.618) & iPSPR & 1035.6 & 3.73e2 &  3.5 &  \textbf{731.1} & 4.28e2 &  2.5 &  876.7 & 5.77e2 &  3.0 & 1764.3 & 1.09e3 &  5.8 \\
(0.000, 1.618) & sPSPR & 1300.3 & 7.03e2 &  4.3 & \textbf{1125.7} & 7.39e2 &  3.7 & 1256.0 & 8.09e2 &  4.2 & 1876.5 & 1.15e3 &  6.2 \\  \cdashline{1-14}[0.8pt/2pt]
(0.000, 1.000) & iPSPR & 1594.1 & 3.61e2 &  5.2 &  927.6 & 3.95e2 &  3.1 &  \textbf{759.7} & 4.74e2 &  2.6 & 1300.8 & 8.25e2 &  4.3 \\
(0.000, 1.000) & sPSPR & 1727.8 & 7.03e2 &  5.7 & 1232.1 & 7.39e2 &  4.1 & \textbf{1223.3} & 8.09e2 &  4.1 & 1847.1 & 1.15e3 &  6.1 \\  \cdashline{1-14}[0.8pt/2pt]
(0.500, 0.500) & iPSPR & 1589.2 & 3.61e2 &  5.2 &  922.7 & 3.95e2 &  3.1 &  \textbf{747.5} & 4.74e2 &  2.6 & 1264.4 & 8.25e2 &  4.2 \\
(0.500, 0.500) & sPSPR & 1724.1 & 7.03e2 &  5.7 & 1224.8 & 7.39e2 &  4.1 & \textbf{1205.5} & 8.09e2 &  4.0 & 1810.6 & 1.15e3 &  6.0 \\
\hline
\end{tabular}\label{result-4000}
}
\end{table}

\newpage
\begin{table}[!htp]
	\setlength{\tabcolsep}{1.5pt}
\centering\small
\caption{\rev{The results for $m = 2000, n = 8000$ over 50 runs. The CPU time is in seconds.}}
\rev{
\begin{tabular}{|cc|rrr|rrr|rrr|rrr|}
 %
 \hline
  &&   \multicolumn{3}{c|}{$\beta =  0.04$}    & \multicolumn{3}{c|}{$\beta = 0.07$} & \multicolumn{3}{c|}{$\beta = 0.15$} &    \multicolumn{3}{c|}{$\beta =0.30$} \\
 \cline{3-14}
 $(\alpha, \gamma)$ & method & iter & $r$ & t  & iter & $r$ & t  & iter & $r$ & t  & iter & $r$ & t  \\   \hline
(0.950, 0.950) & iPSPR &889.6 & 6.80e2 &  6.6  &  \textbf{759.9} & 6.95e2 &  5.7 &  861.4 & 7.55e2 &  6.4 & 1236.9 & 1.05e3 &  9.1 \\
(0.950, 0.950) & sPSPR & \textbf{1487.6} & 1.34e3 &  10.7 & 1556.1 & 1.36e3 &  11.2 & 1658.1 & 1.40e3 &  11.9 & 1819.2 & 1.52e3 &  13.1 \\ \cdashline{1-14}[0.8pt/2pt]
(0.900, 1.000) & iPSPR &  889.7 & 6.80e2 &  6.6 &  \textbf{760.0} & 6.95e2 &  5.7 &  861.5 & 7.55e2 &  6.4 & 1236.9 & 1.05e3 &  9.1 \\
(0.900, 1.000) & sPSPR & \textbf{1487.6} & 1.34e3 &  10.7 & 1556.2 & 1.36e3 &  11.2 & 1658.1 & 1.40e3 &  11.9 & 1819.2 & 1.52e3 &  13.1 \\ \cdashline{1-14}[0.8pt/2pt]
(0.900, 0.900) & iPSPR & 913.9 & 6.80e2&  6.8 &  \textbf{761.9} & 6.94e2 &  5.7 &  854.8 & 7.51e2 &  6.4 & 1210.7 & 1.03e3 &  8.9 \\
(0.900, 0.900) & sPSPR &  1488.1 & 1.34e3 &  10.7 & \textbf{1550.8} & 1.36e3 &  11.2 & 1655.9 & 1.40e3 &  11.9 & 1819.2 & 1.52e3 &  13.1 \\ \cdashline{1-14}[0.8pt/2pt]
(0.800, 1.000) & iPSPR &  913.9 & 6.80e2 &  6.8 &  \textbf{762.0} & 6.94e2 &  5.8 &  854.9 & 7.51e2 &  6.4 & 1211.0 & 1.03e3 &  9.0 \\
(0.800, 1.000) & sPSPR & 1488.3 & 1.34e3 &  10.7 & \textbf{1550.9} & 1.36e3 &  11.3 & 1656.0 & 1.40e3 &  12.0 & 1819.6 & 1.52e3 &  13.2 \\ \cdashline{1-14}[0.8pt/2pt]
(0.809, 0.809) & iPSPR & 968.8 & 6.79e2 &  7.1 &  \textbf{770.7} & 6.93e2 &  5.8 &  842.1 & 7.44e2 &  6.3 & 1164.0 & 9.89e2 &  8.7 \\
(0.809, 0.809) & sPSPR & 1497.5 & 1.34e3 &  10.8 & \textbf{1539.4} & 1.36e3 &  11.2 & 1650.8 & 1.40e3 &  12.0 & 1819.0 & 1.52e3 &  13.2 \\ \cdashline{1-14}[0.8pt/2pt]
(0.618, 1.000) & iPSPR &  969.2 & 6.79e2 &  7.1 &  \textbf{771.2} & 6.93e2 &  5.8 &  842.9 & 7.44e2 &  6.3 & 1165.4 & 9.89e2 &  8.6 \\
(0.618, 1.000) & sPSPR & 1497.7 & 1.34e3 &  10.8 & \textbf{1539.8} & 1.36e3 &  11.2 & 1651.7 & 1.40e3 &  12.0 & 1820.6 & 1.52e3 &  13.2 \\ \cdashline{1-14}[0.8pt/2pt]
(0.000, 1.618) & iPSPR & 972.6 & 6.81e2 &  7.1 &  \textbf{779.9} & 6.96e2 &  5.9 &  873.9 & 7.59e2 &  6.6 & 1289.3 & 1.07e3 &  9.5 \\
(0.000, 1.618) & sPSPR & 1501.2 & 1.34e3 &  10.8 & \textbf{1546.2} & 1.36e3 &  11.2 & 1664.1 & 1.40e3 &  12.0 & 1845.1 & 1.52e3 &  13.4 \\ \cdashline{1-14}[0.8pt/2pt]
(0.000, 1.000) & iPSPR & 1406.3 & 6.76e2 &  10.2 &  922.8 & 6.87e2 &  6.9 &  \textbf{799.3} & 7.25e2 &  6.1 & 1030.0 & 8.76e2 &  7.7 \\
(0.000, 1.000) & sPSPR & 1732.8 & 1.34e3 &  12.4 & \textbf{1500.3} & 1.36e3 &  11.0 & 1625.9 & 1.40e3 &  11.8 & 1825.7 & 1.52e3 &  13.3 \\ \cdashline{1-14}[0.8pt/2pt]
(0.500, 0.500) & iPSPR & 1403.0 & 6.76e2 &  10.1 &  919.9 & 6.87e2 &  6.9 &  \textbf{790.6} & 7.25e2 &  6.0 & 1014.0 & 8.76e2 &  7.6 \\
(0.500, 0.500) & sPSPR & 1730.7 & 1.34e3 &  12.4  & \textbf{1496.1} & 1.36e3 &  10.9 & 1617.5 & 1.40e3 &  11.8 & 1809.7 & 1.52e3 &  13.1 \\
\hline
\end{tabular}\label{result-8000}
}
\end{table}

\section{Conclusions}\label{section:conclusions}

In this paper, we proposed a modification of the Peaceman-Rachford splitting method by introducing two different parameters $\alpha$ and $\gamma$ in updating the dual variable, and by introducing \revise{indefinite} proximal terms to the subproblems in updating the primal variables. We established the relationship between the two parameters $\alpha$ and $\gamma$ and proved the global convergence of the algorithm under some mild \rev{requirements on}  the proximal matrices $S$ and $T$. \revise{Moreover, we provided a specific construction of the proximal matrix $T$ and discussed the detailed performance for the variants parameters $\alpha$ and $\gamma$ which can unify several existing results. We also analyzed the $o(1/t)$ sublinear rate convergence in the nonergodic sense.} Finally, we reported some \rev{preliminary} numerical results, indicating the efficiency of the proposed algorithm.

Note that the parameters $\alpha$ and $\gamma$ are essential to the efficiency of the algorithm, which should be variable along with the iteration. Allowing the parameter $\alpha$ and $\gamma$ varying with the process of the iterate may give us the freedom of choosing them in a self-adaptive manner. Such suitable updating rules are among our future research tasks. Besides, an approximate version of the proposed iPSPR with practical accuracy criteria is also our future research topic.

\vskip 2mm
{\noindent {\bf Acknowledgments.}
The authors thank the referee for their valuable comments which improves the early version of the article.
\revis{This paper is an improved version of our earlier paper \cite{gu2015semi}.}
Part of this work was done during Y. Gu's Master study in Nanjing Normal University and PhD study in Kyoto University. Y. Gu was partially supported by the Natural Science Foundation of Jiangsu Province (Grant No. BK20210267). B. Jiang was supported by the National Natural Science Foundation of China (11971239) and the Natural Science Foundation of the Higher Education
Institutions of Jiangsu Province (21KJA110002). D. Han was supported by the National Natural Science Foundation of China (12131004, 11625105).
}

\end{document}